\numberwithin{equation}{section}
   \newtheorem{thm}{Theorem}[section]
   \newtheorem{cor}[thm]{Corollary}
   \newtheorem{prop}[thm]{Proposition}
   \newtheorem{lemma}[thm]{Lemma}
   \newtheorem{claim}[thm]{Claim}
\theoremstyle{definition}
   \newtheorem{dfn}[thm]{Definition}
\theoremstyle{remark}
   \newtheorem{rem}[thm]{Remark}
\newcommand{\m}{\underline{m}}
\newcommand{\C}{\mathbb C}
\newcommand{\fM}{\mathfrak m}
\newcommand{\fL}{\mathfrak l}
\newcommand{\Z}{\mathbb Z}
\newcommand{\ve}{\varepsilon}
\newcommand{\bd}{\partial}
\begin{document}
\title[Compatible contact structures of fibered graph multilinks]{Compatible contact structures of fibered positively-twisted graph multilinks in the 3-sphere}
\author{Masaharu Ishikawa}
\footnote[0]{This work is supported by MEXT, Grant-in-Aid for Young Scientists (B) (No. 22740032).}
\address{Mathematical Institute, Tohoku University, Sendai, 980-8578, Japan}
\email{ishikawa@math.tohoku.ac.jp}
%\keywords{}
%\subjclass{Primary: 57M50, Secondary: 55R25, 32S30, 57R17}
%\date{11 August, 2006.}

\begin{abstract}
We study compatible contact structures of fibered, positively-twisted graph 
multilinks in $S^3$ and prove that the contact structure of such a multilink
is tight if and only if the orientations of its link components are
all consistent with or all opposite to the orientation of the fibers of the
Seifert fibrations of that graph multilink.
As a corollary, we show that the compatible contact structures of
the Milnor fibrations of real analytic germs of the form $(f\bar g,O)$
are always overtwisted.
\end{abstract}

\maketitle

\section{Introduction}

A contact structure on a closed, oriented, smooth $3$-manifold $M$ is
the kernel of a $1$-form $\alpha$ on $M$ satisfying $\alpha \land d\alpha\ne 0$
everywhere. In this paper, we only consider positive contact forms
i.e., contact forms $\alpha$ satisfying $\alpha\land d\alpha>0$.
The idea of contact structures compatible with fibered links in $M$
was first introduced by W.P.~Thurston and H.~Winkelnkemper in~\cite{tw}
and developed by E.~Giroux in~\cite{giroux}.
In the previous work~\cite{ishikawa2}, 
the compatible contact structures of fibered Seifert multilinks
in Seifert fibered homology $3$-spheres $\Sigma(a_1,a_2,\cdots,a_k)$ were studied.
Here $a_i$'s are the denominators of the Seifert invariants.
Especially, we determined their tightness in case $a_1a_2\cdots a_k>0$.
The $3$-sphere with positive Hopf fibration is
a typical example of Seifert fibered homology $3$-spheres satisfying this inequality.

A graph multilink is obtained from Seifert multilinks by iterating 
a certain gluing operation, called a {\it splicing}.
We focus on fibered graph multilinks obtained as a splice of Seifert multilinks 
in homology $3$-spheres with $a_1a_2\cdots a_k> 0$,
which we call {\it positively-twisted graph multilinks} in homology $3$-spheres.
For convenience, we may assume that the denominators of the
Seifert invariants of the Seifert fibered homology
$3$-spheres constituting the graph multilink are all positive. This is always possible
as mentioned in~\cite[Proposition~7.3]{en}. In this setting,
we say that the orientation of a graph multilink is {\it canonical}
if the multiplicities of its link components are either all positive or all negative.

In this paper, we determine the tightness of positively-twisted graph multilinks in $S^3$.

\begin{thm}\label{thm01}
The compatible contact structure of a fibered, positively-twisted graph multilink in $S^3$ 
is tight if and only if its orientation is canonical.
\end{thm}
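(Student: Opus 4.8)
The plan is to cut $L$ along its splice tori, reduce to the Seifert multilinks treated in \cite{ishikawa2}, and control what happens at the gluings. By the Eisenbud--Neumann theory \cite{en}, a positively-twisted fibered graph multilink $L$ in $S^3$ is obtained from Seifert multilinks $L_1,\dots,L_n$ in Seifert fibered homology $3$-spheres $\Sigma_1,\dots,\Sigma_n$ (whose Seifert invariants we normalise to positive denominators, as permitted by \cite[Proposition~7.3]{en}) by iterated splicing; the splice decomposition writes $S^3$ as the union of the pieces $\Sigma_i$ along a family of incompressible splice tori, and the fibrations of the pieces glue to the fibration of $S^3\setminus L$. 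Applying the Thurston--Winkelnkemper construction \cite{tw,giroux} fibrewise produces the compatible contact structure $\xi$ together with its restrictions $\xi_i$ to the pieces. First I would arrange, using the contact forms adapted to the Seifert fibrations in \cite{ishikawa2}, that every splice torus is convex and incompressible with dividing set a pair of parallel Seifert fibres; since the splicing map interchanges meridian and longitude, the dividing sets induced from the two sides of each splice torus are forced to be isotopic, which is exactly the compatibility needed to glue the $\xi_i$ back into $\xi$.

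The second ingredient, and the place where positive-twisting is used, is a bookkeeping lemma: the positivity of the relevant linking numbers forces the fibre orientations---and with them the signs of the multiplicities---to propagate without change across every splice. Consequently the orientation of $L$ is canonical if and only if the induced orientation of every Seifert piece $L_i$ is canonical. This is what reduces the global tightness question to the local, Seifert one, and I would establish it directly from the multiplicity system of the splice diagram in \cite{en}.

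Granting these two ingredients, the ``if'' direction follows by gluing: if $L$ is canonically oriented then each $L_i$ is canonically oriented, so the compatible contact structure $\xi_i$ of each Seifert multilink $(\Sigma_i,L_i)$ is tight by \cite{ishikawa2}; gluing the pieces back together one splice torus at a time---each torus being incompressible and in convex position with matching dividing set---keeps the contact structure tight, by a gluing theorem for tight contact structures along incompressible tori of the type established by Colin, and hence $\xi$ is tight. For the ``only if'' direction, suppose $L$ is not canonically oriented; by the bookkeeping lemma some piece $L_{i_0}$ is not canonically oriented, so \cite{ishikawa2} produces an overtwisted disk in $\xi_{i_0}$. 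Because this disk can be isotoped into the interior of $\Sigma_{i_0}$, away from the splice tori and the splice knots, it survives in $(S^3,\xi)$, so $\xi$ is overtwisted.

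I expect the main obstacle to be the gluing step of the ``if'' direction. One must check that the explicit contact forms of \cite{ishikawa2} on adjacent pieces can be simultaneously normalised so that \emph{every} splice torus is convex with precisely the dividing set that the meridian--longitude interchange forces to match on the two sides, and that these tori are taut enough (in particular not compressible or boundary-parallel in a way that would create a bypass and destroy tightness) for the gluing theorem to apply. By contrast the overtwistedness direction is essentially local and reads off directly from \cite{ishikawa2} once the disk is placed in the interior of the offending piece. Verifying the convex-position and dividing-set matching along all splice tori simultaneously is where the delicate contact-topology lies.
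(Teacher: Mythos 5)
Your overall skeleton (cut along the splice tori, invoke \cite{ishikawa2} on the Seifert pieces, and propagate signs with \cite[Corollary~10.6]{en} --- your ``bookkeeping lemma'' is essentially the paper's Claim~\ref{claim102}) matches the paper, but both of your gluing steps have genuine gaps. For the ``if'' direction, the appeal to a Colin-type gluing theorem for tight contact structures along incompressible tori cannot work as stated: the splice tori are incompressible only in the link complement $S^3\setminus L$, not in $S^3$ itself, which is atoroidal, and tightness is not in general preserved by gluing along compressible tori (nor by gluing merely tight, as opposed to universally tight, pieces). The paper avoids this entirely by exploiting that a graph multilink in $S^3$ is built by splicing along components that are trivial knots, so each splice replaces a solid-torus neighborhood $N(S)$ by another solid torus $S^3\setminus\mathrm{int}\,N(S_{\ell,1})$; after shrinking and applying Gray's theorem the new contact form agrees with the old one there, so the resulting contact structure on $S^3$ is contactomorphic to the one before the splice and tightness is inherited from the tight structure on $(\Sigma_1,L_1(\m_1))$. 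This is where the hypothesis ``in $S^3$'' is used essentially, and your route discards it.

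For the ``only if'' direction there is a second gap: the overtwisted disk produced by \cite{ishikawa2} lives in \emph{a} contact structure compatible with the Seifert piece $(\Sigma_{i_0},L_{i_0})$, and compatible contact structures are unique only up to contactomorphism. The restriction of the global compatible structure $\xi$ to that Seifert piece need not equal $\xi_{i_0}$ there, so ``the disk can be isotoped into the interior of $\Sigma_{i_0}$ and survives'' does not follow without further work: the contactomorphism identifying the two structures could a priori drag the disk across the splice tori. The paper spends Sections~3 and~4 building two explicit global compatible contact forms --- one (Proposition~\ref{prop2000}) with a half Lutz twist localized in a tube around each negative component, and one (Proposition~\ref{lemma031}) that is standard near the splice tori with the argument of $-h_1+\sqrt{-1}h_2$ confined to $(0,\pi]$ --- and then, in Claim~\ref{claim101}, runs the Moser/Gray interpolation of \cite[Proposition~9.2.7]{os} relative to the neighborhoods $N(S_i)$ to arrange that the overtwisted disk's boundary stays in $\Sigma_\ell\setminus\sqcup_i N(S_i)$, where the two forms literally agree. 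Note also that the offending negative sign may sit on an \emph{inner} edge rather than an arrowhead, so the Lutz tube may surround a splice component that is about to be removed; this is exactly why the localization argument, and not just ``the disk lies in the interior of the piece,'' is needed.
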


A typical example of positively-twisted graph multilinks in $S^3$
is an oriented link obtained from a trivial knot in $S^3$ by iterating
``positive'' cablings. Here a ``positive'' cabling means that
the cabling coefficients are positive with respect to the framing of the
Seifert surface of the link component for the cabling.

The situation in Theorem~\ref{thm01} occurs when we consider
the Milnor fibration of a real analytic germ of the form $(f\bar g,O)$,
where $f,g:(\C^2,O)\to (\C,0)$ are holomorphic germs at the origin $O\in\C^2$
and $\bar g$ represents the complex conjugation of $g$.
In~\cite{pichon, ps}, it is proved that
\[
   \frac{f\bar g}{|f\bar g|}:S_\ve\setminus \{fg=0\}\to S^1
\]
is a locally trivial fibration in most cases, called the {\it Milnor fibration}
of $(f\bar g,O)$, where $S_\ve$ is the $3$-sphere centered at $O\in\C^2$ with
sufficiently small radius $\ve>0$.
The next result answers a question of A.~Pichon asked in her talk in Luminy, May, 2006
(cf.~\cite{ishikawa}).

\begin{cor}\label{cor03}
Suppose that 
   $\frac{f\bar g}{|f\bar g|}:S_\ve\setminus \{fg=0\}\to S^1$ is a locally trivial
fibration. Then its compatible contact structure is overtwisted.
\end{cor}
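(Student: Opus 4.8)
The plan is to identify the Milnor fibration of $(f\bar g,O)$ as the compatible open book of a fibered, positively-twisted graph multilink in $S^3$ whose orientation is \emph{not} canonical, and then to quote Theorem~\ref{thm01} verbatim. First I would arrange that $f$ and $g$ share no common branch. If $h$ is the greatest common divisor of $f$ and $g$ and we write $f=hf_1$ and $g=hg_1$, then $f\bar g=|h|^2\,f_1\bar g_1$, so on $S_\ve\setminus\{fg=0\}$ we have $\frac{f\bar g}{|f\bar g|}=\frac{f_1\bar g_1}{|f_1\bar g_1|}$; the factor $|h|^2$ is real and positive and therefore contributes nothing to the winding. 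Hence we may replace $f,g$ by the coprime pair $f_1,g_1$, which define the same fibration and the same compatible contact structure, and both of which are non-units since they vanish at $O$.

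Next I would invoke the description of the Milnor fibration of $f\bar g$ given by Pichon and Pichon--Seade in~\cite{pichon,ps}. The zero set $\{fg=0\}\cap S_\ve$ is an algebraic link, hence an iterated torus link obtained by positive cablings; in the language of splice diagrams it is assembled from Seifert links in homology $3$-spheres all of whose Seifert invariants may be taken positive (the positive Hopf fibration being the local model), exactly as in the example following Theorem~\ref{thm01}. Thus the underlying object is a positively-twisted graph link in $S^3$. The essential point is to read off the multiplicity that the map $\frac{f\bar g}{|f\bar g|}$ assigns to each component: along a branch contained in $\{f=0\}$ the germ $f\bar g$ winds positively, giving multiplicity equal to the vanishing order of $f$, whereas along a branch contained in $\{g=0\}$ the complex conjugation in $\bar g$ reverses this winding and produces multiplicity equal to \emph{minus} the vanishing order of $g$.

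Because $f$ and $g$ are now coprime and both non-units, $\{f=0\}$ furnishes at least one component of positive multiplicity and $\{g=0\}$ furnishes at least one component of negative multiplicity. Therefore the multiplicities of the multilink associated with $f\bar g$ are neither all positive nor all negative, that is, its orientation is not canonical. Since this is a fibered, positively-twisted graph multilink in $S^3$, Theorem~\ref{thm01} applies and shows that its compatible contact structure is overtwisted, which is the desired assertion.

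The step I expect to be the main obstacle is the second one: extracting from the resolution and plumbing data of $f\bar g$ in~\cite{ps} both the fact that the underlying graph link is positively-twisted and the precise claim that the sign of each component's multiplicity is determined solely by whether the branch lies in $\{f=0\}$ or in $\{g=0\}$. The orientation-reversing effect of $\bar g$ has to be followed through the splicing that builds up the graph multilink, and one must verify that no cancellation or unexpected sign change occurs at the splice components; once these signs are controlled, the non-canonicity of the orientation, and hence overtwistedness, follows at once from Theorem~\ref{thm01}.
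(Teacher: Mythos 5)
Your proposal is correct and follows essentially the same route as the paper: the paper likewise observes that $L_{fg}$ is realized by a splice diagram with only $+$ signs, positive denominators and positive multiplicities (citing the Appendix to Chapter~I of~\cite{en}), that passing to $L_{f\bar g}$ reverses the orientations of the components coming from $\{g=0\}$ (citing~\cite[Proposition~3.1]{pichon}), and then applies Theorem~\ref{thm01} to the resulting non-canonically oriented PT graph multilink. The sign bookkeeping you flag as the main obstacle is exactly what the citation to Pichon handles, and your preliminary reduction to coprime $f,g$ is a harmless extra step not needed in the paper's argument.
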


This paper is organized as follows.
In Section~2, we fix the notations of Seifert fibered homology $3$-spheres, 
Seifert multilinks, and graph multilinks, following the book~\cite{en}.
The notion of their compatible contact structures is also introduced in this section.
The next two sections are devoted to preparations for the proof of Theorem~\ref{thm01}.
In Section~3, we give a compatible contact structure of a fibered, positively-twisted 
graph multilink in a special case. We then give another compatible contact structure
in Section~4 based on the Thurston-Winkelnkemper's construction in~\cite{tw}.
These compatible contact structures will be used to prove Theorem~\ref{thm01}
in Section~5. Corollary~\ref{cor03} will also be proved in this section.

%The author would like to thank  
%Jos\'{e} Mar\'{i}a Montesinos-Amilibia, Shigeaki Miyoshi, Kimihiko Motegi and Atsuhide Mori
%for their precious comments.
%This work is supported by MEXT, Grant-in-Aid for Young Scientists (B) (No. 19740029).

\section{Preliminaries}

In the following, int$X$ and $\bd X$ represent the interior and the boundary of a
topological space $X$ respectively.

\subsection{Notation of Seifert fibered homology $3$-spheres}

We follow the notation used in~\cite{ishikawa2}, which originally appears in~\cite{en}.

Let $\mathcal S=S^2\setminus\text{int}(D_1^2\cup\cdots\cup D_k^2)$
be a 2-sphere with $k$ holes and make
an oriented, closed, smooth $3$-manifold $\Sigma$ from $\mathcal S\times S^1$ by 
gluing solid tori $(D^2\times S^1)_1,\cdots,(D^2\times S^1)_k$
along the boundary $\bd (\mathcal S\times S^1)$ in such a way that 
$a_iQ_i+b_iH$ is null-homologous in $(D^2\times S^1)_i$,
where 
\[
\begin{split}
   Q_i&=(-\bd \mathcal S^{\text{\rm sec}})\cap (D^2\times S^1)_i \\
   H&=\text{typical oriented fiber of $\pi$ in $\bd (D^2\times S^1)_i$},
\end{split}
\]
with $\mathcal S^{\text{\rm sec}}$ a section of $\pi:\mathcal S\times S^1\to \mathcal S$ and
$(a_i,b_i)\in\Z^2\setminus\{(0,0)\}$
are chosen such that $\sum_{i=1}^kb_ia_1\cdots a_{i-1} a_{i+1}\cdots a_k=1$.
The obtained $3$-manifold does not depend on the ambiguity of the choice of $b_i$'s,
so we may denote it simply as $\Sigma=\Sigma(a_1,\cdots,a_k)$.
The core curve $S_i$ of each solid torus $(D^2\times S^1)_i$
is a fiber of the Seifert fibration after the gluings.
We assign to $S_i$ an orientation
in such a way that the linking number of $S_i$ and $a_i Q_i+b_i H$ equals $1$.
This orientation is called the {\it working orientation}.

Let $(\fM_i,\fL_i)$ be the preferred meridian-longitude pair of the link complement 
$\Sigma\setminus S_i$ chosen such that the orientation of the longitude $\fL_i$
agrees with the working orientation of $S_i$.
In this setting, $(\fM_i,\fL_i)$ and $(Q_i,H)$ are related by the following equations, 
see~\cite[Lemma~7.5]{en}:
\begin{equation}\label{eqab}
   \begin{pmatrix} \fM_i \\ \fL_i \end{pmatrix}
   =
   \begin{pmatrix} a_i & b_i \\ -\sigma_i & \delta_i \end{pmatrix}
   \begin{pmatrix} Q_i \\ H \end{pmatrix}\quad\text{and}\quad
   \begin{pmatrix} Q_i \\ H \end{pmatrix}
   =
   \begin{pmatrix} \delta_i & -b_i \\ \sigma_i & a_i \end{pmatrix}
   \begin{pmatrix} \fM_i \\ \fL_i \end{pmatrix},
\end{equation}
where $\sigma_i=a_1\cdots\hat a_i\cdots a_k$ and 
$\delta_i=\sum_{i\ne j}b_ja_1\cdots \hat a_i\cdots \hat a_j\cdots a_k$. 
Note that they satisfy $a_i\delta_i+b_i\sigma_i=1$.

Set $A=a_1\cdots a_k$.
Under the assumption $A\ne 0$, the orientations of the fibers of the Seifert fibration
in $\mathcal S\times S^1\to \mathcal S$ are canonically extended 
to the fibers in $(D^2\times S^1)_i$ for each $i=1,\cdots,k$,
which we call the {\it orientation of the Seifert fibration}.

\subsection{Seifert multilinks in $\Sigma(a_1,\cdots,a_k)$}

A Seifert link $L$ is a link in $\Sigma(a_1,\cdots,a_k)$ consisting of
a finite number of fibers of the Seifert fibration.
We may choose the core curves $S_1,\cdots,S_k$ of the solid tori $(D^2\times S^1)_i$
such that $S_1\cup S_2\cup \cdots\cup S_n$ is the Seifert link $L$ for some $n\leq k$.
A Seifert multilink is a Seifert link
each of whose link components is equipped with a non-zero integer, called the {\it multiplicity}.
We denote the set of multiplicities as $\m=(m_1,\cdots,m_n)$ and
the Seifert multilink as 
\[
   (\Sigma,L(\m))=(\Sigma(a_1,\cdots,a_k), m_1S_1\cup\cdots\cup m_nS_n).
\]
We may denote it simply as $L(\m)$.

Each link component of $L(\m)$ is canonically oriented according to its multiplicity,
i.e., the orientation is defined to be consistent with the working orientation if $m_i>0$
and opposite to it if $m_i<0$.
In this paper, we allow $m_i$ to be $0$ for convenience,
which means that $S_i$ is not a component of $L(\m)$.
We may call such an $S_i$ also a link component of $L(\m)$ though it is an empty component.

\begin{dfn}
A Seifert multilink $(\Sigma,L(\m))$ is called {\it positively-twisted} (or {\it PT} for short)
if $A=a_1\cdots a_k>0$.
%, and called {\it negatively-twisted} (or {\it NT} for short) if $A<0$.
\end{dfn}

\begin{rem}
The notion of positivity is usually used for oriented links, as positive braids and positive links.
We here say that ``a Seifert multilink is positively-twisted'' because 
this is a notion for (multi-)links without specific orientation.
\end{rem}

\begin{dfn}
Suppose $A\ne 0$ and fix an orientation of the Seifert fibration.
We say that a link component $m_iS_i$ of a Seifert multilink $L(\m)$ with $m_i\ne 0$ is
{\it positive} (resp. {\it negative}) if its orientation is consistent with
(resp. opposite to) the orientation of the Seifert fibration.
\end{dfn}

\subsection{Splicing and graph multilinks}

Let $(\Sigma_1, L_1)$ and $(\Sigma_2, L_2)$ be links in homology $3$-spheres
$\Sigma_1$ and $\Sigma_2$ respectively. Choose a link component $S_1$ of $L_1$ and also
$S_2$ of $L_2$.
For each $i=1,2$,
let $N(S_i)$ be a compact tubular neighborhood of $S_i$ and
$(\fM_i,\fL_i)$ be a preferred meridian-longitude pair of $\Sigma_i\setminus\text{int\,}N(S_i)$.
Remark that the longitude $\fL_i$ is chosen such that it is null-homologous in 
the exterior $\Sigma_i\setminus\text{int\,}N(S_i)$.
We then glue the two exteriors
$\Sigma_1\setminus \text{int\,} N(S_1)$ and $\Sigma_2\setminus \text{int\,} N(S_2)$
in such a way that $(\fM_1,\fL_1)$ are identified with $(\fL_2,\fM_2)$ along the boundaries.
The link $(L_1\setminus S_1)\cup(L_2\setminus S_2)$ in the glued manifold 
is called the {\it splice} of $L_1$ and $L_2$ along $S_1$ and $S_2$.
Note that the glued manifold again becomes a homology $3$-sphere.

Let $\Gamma$ be a connected, simply-connected finite graph with the following decorations:
\begin{itemize}
\item
Each terminal vertex is either a boundary vertex or an arrowhead vertex.
Here the former is a usual vertex and the latter means the arrowhead of an edge $\longrightarrow$
of arrow shape.
\item
Each non-terminal vertex has sign $+$ or $-$.
We call such a vertex an {\it inner vertex}
and an edge connecting two inner vertices an {\it inner edge}.
\item 
For each inner vertex, integers $a_1,\cdots,a_k$ representing a Seifert fibered 
homology $3$-sphere $\Sigma(a_1,\cdots,a_k)$ are assigned to the roots of 
the edges connected to that vertex.
\item A non-zero integer $m_i$ is assigned to each arrowhead vertex.
\end{itemize}
Such a diagram is called a {\it splice diagram}.
In this paper, we only consider splice diagrams whose inner vertices have only $+$ signs.
So, we do not need to mind these signs.

We define a multilink from a given splice diagram $\Gamma$ as follows.
First we prepare a Seifert link $(\Sigma(a_1,\cdots,a_k), S_1\cup\cdots\cup S_n)$
 for each inner vertex with integers $a_1,\cdots,a_k$ at the roots of the adjacent edges.
If two inner vertices are connected by an edge, then we apply a splicing to
the corresponding Seifert links.
Applying splicings for all inner edges successively,
we obtain a new link $L$ in a homology $3$-sphere $\Sigma$.
We then define the multiplicity of each link component of $L$ to be
the integer assigned to the corresponding arrowhead vertex.
Note that the working orientations of the link components of $L$ are defined to be
those of the Seifert fibered homology $3$-spheres.
The multilink obtained from $\Gamma$ is called a
{\it graph multilink} and denoted as $L(\Gamma)$.

Now we consider the inverse operation of splicing. Namely, decompose
$\Sigma$ along a torus $T$ and then fill the two boundary components
by solid tori $N(S_1)$ and $N(S_2)$ with new link components $S_1$ and $S_2$
being the core curves of $N(S_1)$ and $N(S_2)$ respectively, so that we obtain two new graph links
$(\Sigma_1, L_1)$ and $(\Sigma_2,L_2)$. 
The restriction of the Seifert surface of $L(\Gamma)$ to $\Sigma_i\setminus \text{int\,}N(S_i)$ 
is canonically extended into $N(S_i)$. If the Seifert surface in $N(S_i)$ is a disjoint union 
of meridional disks, then we define the multiplicity of $S_i$ to be $0$.
Otherwise, it is defined to be the number of local leaves of the Seifert surface along $S_i$,
with sign $-$ if the orientation of $S_i$ as the boundary of the Seifert surface of $L_i$
is opposite to its working orientation.
The multiplicities of the other link components of $L_1$
and $L_2$ are defined to be those of $L(\Gamma)$. 
We denote the obtained multilinks as $(\Sigma_i, L(\Gamma_i))$,
where $\Gamma_1$ and $\Gamma_2$ are the corresponding splice diagrams,
and their splice as 
\[
    (\Sigma, L(\Gamma))
     =\left[(\Sigma_1, L(\Gamma_1)) \textstyle\frac{\;\;\;\;\;\;}{S_1\;\;\;\;\;S_2} (\Sigma_2, L(\Gamma_2))\right].
\]
Applying such a decomposition successively,
we can represent $(\Sigma, L(\Gamma))$ as a splice of several Seifert multilinks
in homology $3$-spheres each of which corresponds to an inner vertex of $\Gamma$.
In other words, there exists a set of disjoint tori $T_1,\cdots,T_\tau$ 
corresponding to the inner edges of $\Gamma$ such that
each connected component of $\Sigma\setminus \sqcup_{j=1}^\tau T_j$ 
is a part of a Seifert fibered homology $3$-sphere.
We call these connected components the {\it Seifert pieces} of $(\Sigma, L(\Gamma))$.

In this paper, we only consider the following special class of graph multilinks.

\begin{dfn}
A graph multilink is called {\it positively-twisted} (or {\it PT} for short)
if it is obtained as a splice of only PT Seifert multilinks in homology $3$-spheres.
\end{dfn}

To simplify the argument, we hereafter assume that the denominators of the Seifert invariants 
of the Seifert fibered homology
$3$-spheres before the splicing are all positive.
We can always assume this by~\cite[Proposition~7.3]{en}.
Note that, under this assumption, the orientation of the Seifert fibration coincides
with the working orientations of the link components for each Seifert multilink before
the splicing.

In~\cite[Theorem~8.1]{en}, six operations to produce equivalent splice diagrams
are introduced and a splice diagram is called {\it minimal} if there is no
equivalent splice diagram with fewer edges.
In this paper, we will only deal with splice diagrams of fibered PT graph multilinks
whose inner vertices have sign $+$ and each of whose Seifert fibered
homology $3$-spheres before the splicing has the Seifert invariants with positive denominators.
In this setting, two splice diagrams are equivalent
if they are connected by the following two operations and their inverses:
\begin{itemize}
\item[3)] Let $v$ be an inner vertex.
If an edge connected to $v$ is assigned the integer $1$ at the root and has a boundary vertex
at the other endpoint then remove the edge and the boundary vertex.
Furthermore, if the number of remaining edges connected to $v$
is $2$ then remove the inner vertex $v$ and connect the two edges so that they become
a single edge.
\item[6)] Let $v$ and $v'$ be inner vertices connected by an inner edge.
Let $a_0,a_1\cdots,a_r$ and $a'_0,a'_1,\cdots,a'_s$ be the denominators
of the Seifert invariants assigned to $v$ and $v'$ respectively
such that $a_0$ and $a'_0$ are assigned to the inner edge connecting them.
If $a_0a_0'=a_1\cdots a_ra_1'\cdots a_s'$ is satisfied
then replace the vertices $v_1$ and $v_2$ and the edge connecting them by a single inner vertex.
\end{itemize}
The numbers $3)$ and $6)$ correspond to those in~\cite[Theorem~8.1]{en}.
We say that a minimal splice diagram is of type $\leftrightarrow$ if
it consists of one edge with arrowhead vertices at both endpoints.
This will be an exceptional case as in~\cite[Theorem~11.2]{en}.

\subsection{Fibered graph multilinks and contact structures}

We first briefly recall the terminologies in $3$-dimensional contact topology.
See~\cite{geiges,os} for general references.

A {\it contact structure} on $M$ is the $2$-plane field given by
the kernel of a $1$-form $\alpha$ satisfying $\alpha\land d\alpha\ne 0$ everywhere on $M$.
In this paper, we always assume that a contact structure is positive, i.e.,
it is given as the kernel of a $1$-form $\alpha$ satisfying $\alpha\land d\alpha>0$,
called a {\it positive contact form} on $M$.
A vector field $R_\alpha$ on $M$ determined by the conditions $d\alpha(R_\alpha,\cdot)\equiv 0$
and $\alpha(R_\alpha)\equiv 1$ is called the {\it Reeb vector field} of $\alpha$.
The $3$-manifold $M$ equipped with a contact structure $\xi$ is called
a {\it contact manifold} and denoted by $(M,\xi)$.
Two contact manifolds $(M_1,\xi_1)$ and $(M_2,\xi_2)$ are said to be
{\it contactomorphic} if there exists a diffeomorphism $\varphi:M_1\to M_2$
such that $d\varphi:TM_1\to TM_2$ satisfies $d\varphi(\xi_1)=\xi_2$.
A disk $D$ in $(M,\xi)$ is called {\it overtwisted} if 
$D$ is tangent to $\xi$ at each point on $\bd D$.
If $(M,\xi)$ has an overtwisted disk then we say that $\xi$ is {\it overtwisted}
and otherwise that $\xi$ is {\it tight}. 

A graph multilink $L(\Gamma)$ is called {\it fibered}
if there is a fibration $\Sigma \setminus L(\Gamma)\to S^1$ such that 
\begin{itemize}
\item the intersection of the fiber surface and a small tubular neighborhood $N(S_i)$ of each 
link component $S_i$ of $L(\m)$
consists of $|m_i|>0$ leaves meeting along $S_i$ if $m_i\ne 0$, and
\item 
the working orientation of $S_i$ is consistent with (resp. opposite to) 
the orientation as the boundary of the fiber surface if $m_i>0$ (resp. $m_i<0$).
\end{itemize}

A fibered graph multilink $L(\Gamma)$ in $\Sigma$ is said to be {\it compatible} with 
a contact structure $\xi$ on $\Sigma$
if there exists a contact form $\alpha$ on $\Sigma$ whose kernel
is contactomorphic to $\xi$ and which satisfies that
$L(\Gamma)$ is positively transverse to $\ker\alpha$ and
$d\alpha$ is a volume form on the interiors of the fiber surfaces of $L(\Gamma)$;
in other words,
the Reeb vector field $R_\alpha$ of $\alpha$ is tangent to $L(\m)$
in the same direction and positively transverse to the interiors of the fiber surfaces
of $L(\m)$, see~\cite[Lemma~3.2]{ishikawa2}.

We remark that multilinks, fibered multilinks, and their compatible contact structures
are defined for any closed, oriented, smooth $3$-manifolds,
though we need to assign some working orientations to the link components at the beginning,
see~\cite{ishikawa2}. The same notion appears in~\cite{behm}, in which 
the fibration of a fibered multilink is called 
a {\it rational open book decomposition} of that $3$-manifold.
It is known that any fibered multilink in a closed, oriented, smooth $3$-manifold
admits a compatible contact structure
and two contact structures compatible with the same fibered multilink are contactomorphic,
see~\cite[Proposition~3.3 and~3.4]{ishikawa2} or~\cite[Theorem~1.7]{behm}.

We close this section with introducing a way of describing a contact structure
on $D^2\times S^1$, which we used in~\cite{ishikawa2}.
Let $\gamma$ be a curve on an $xy$-plane with parameter $r\in[0,1]$ 
which moves around $(0,0)$ in clockwise orientation.
For each point $\gamma(r)=(x(r),y(r))$ we set $(-h_1(r), h_2(r))=(x(r), y(r))$ and
define a $1$-form on $D^2\times S^1$ as $\alpha=h_2(r)d\mu+h_1(r)d\lambda$,
where $(r,\mu,\lambda)$ are coordinates of $D^2\times S^1$ with
polar coordinates $(r,\mu)$ of $D^2$, and $h_1$ and $h_2$ are real-valued smooth
functions with parameter $r$. 
Since the curve $\gamma$ rotates in clockwise orientation
it satisfies the inequality $h_1h_2'-h_2h_1'>0$, and this
implies the inequality $\alpha\land d\alpha>0$ except for the points at $r=0$.
Near $r=0$, we may assume that either $(-h_1,h_2)=(-c,r^2)$ or $(-h_1,h_2)=(c,-r^2)$
for some positive real number $c$, so that
$\alpha$ becomes a positive contact form on the whole $D^2\times S^1$.
See the right figure in Figure~\ref{fig0}. 

\begin{figure}[htbp]
   \centerline{\input{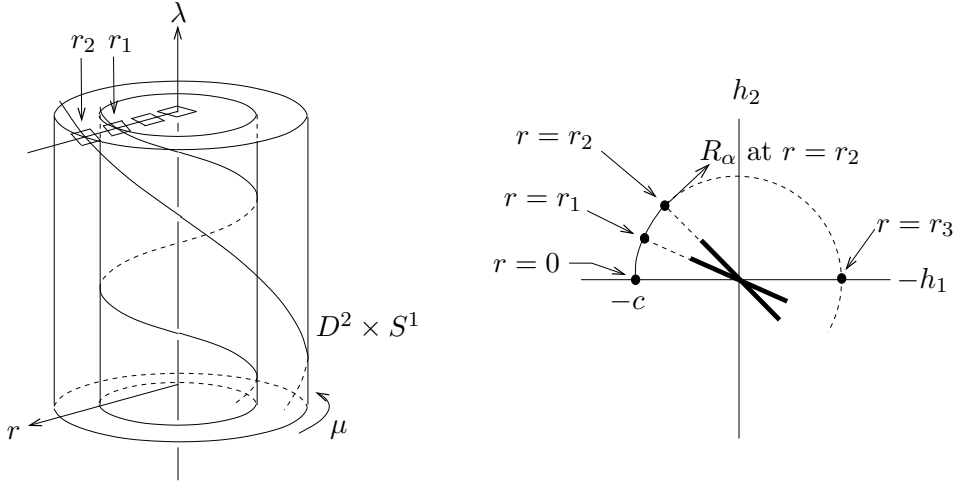}}
   \caption{How to read $\ker\alpha$ and $R_\alpha$ from the curve $\gamma(r)=(-h_1(r),h_2(r))$.\label{fig0}}
\end{figure}

Let $T(r)$ be a torus parallel to $\bd D^2\times S^1$ of radius $r>0$.
Since the positive normal vector to the contact structure $\ker\alpha$ is
$(h_2(r),h_1(r))$ on $T(r)$, the line on the $xy$-plane
connecting $(0,0)$ and $(-h_1(r),h_2(r))$ represents the slope of $\ker\alpha$
at $(r,\mu,\lambda)$.
Moreover, since the Reeb vector field of $\alpha$ is given as
\[
R_\alpha=\frac{1}{h_1h_2'-h_2h_1'}\left(-h_1'\frac{\bd}{\bd \mu}+h_2'\frac{\bd}{\bd \lambda}\right),
\]
the speed vector $\gamma'(r)=(-h_1'(r),h_2'(r))$ is parallel to $R_\alpha$ on $T(r)$
in the same direction.
If the curve $\gamma$ reaches the positive $x$-axis on the $xy$-plane, say at $r=r_3$,
then the contact structure $\ker\alpha$ has an overtwisted disk
$\{(r,\mu,\lambda)\in D^2\times S^1\mid \lambda=\text{constant},\;\,r\leq r_3\}$.
This is a typical example of overtwisted contact structures, called a {\it half Lutz twist}.

\section{Compatible contact structures in a special case}

Let $\Gamma$ be a minimal splice diagram not of type $\leftrightarrow$.
If a graph multilink $(\Sigma,L(\Gamma))$ is fibered then
the interiors of the fiber surfaces of $L(\Gamma)$ intersect
the fibers of the Seifert fibration in each Seifert piece transversely, see~\cite[Theorem~11.2]{en}
and the proof therein.
Note that the orientations of the fibers of the Seifert fibrations are
determine by formula~\eqref{eqab}.

\begin{dfn}
For a minimal splice diagram $\Gamma$ not of type $\leftrightarrow$,
we define $\hat\Gamma$ to be the diagram obtained from $\Gamma$
by applying the following modifications:
\begin{itemize}
\item[(1)] Replace each inner vertex of $\Gamma$ by $\oplus$ (resp. $\ominus$) if the fibers of 
the Seifert fibration in the corresponding Seifert piece is positively (resp. negatively) 
transverse to the fiber surface of $L(\Gamma)$.
\item[(2)] 
For each inner vertex $v$, assign $+$ (resp. $-$) 
to the root of each edge connected to $v$ if the multiplicity of the corresponding link
component of the Seifert multilink is positive or zero (resp. negative).
In particular, we assign $+$ to each edge with boundary vertex at the other endpoint
since it is regarded as an empty link component having multiplicity $0$.
\end{itemize}
\end{dfn}

Recall that we assumed that the denominators of the Seifert invariants 
of the Seifert fibered homology $3$-spheres before the splicing are all positive.

\begin{prop}\label{prop2000}
Let $(\Sigma,L(\Gamma))$ be a fibered PT graph multilink of a minimal splice diagram $\Gamma$
not of type $\leftrightarrow$.
Suppose that $\hat\Gamma$ has only $\oplus$ vertices and that all inner edges have only $+$ signs. 
If $L(\Gamma)$ has a component with negative multiplicity then its compatible contact structure
is overtwisted.
\end{prop}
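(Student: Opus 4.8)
The plan is to exhibit a single compatible contact form on $\Sigma$ and locate an overtwisted disk inside it; since any two contact structures compatible with the same fibered multilink are contactomorphic (\cite[Proposition~3.3 and~3.4]{ishikawa2} or \cite[Theorem~1.7]{behm}), this is enough. First I would build the form piece by piece. On each Seifert piece the hypothesis that $\hat\Gamma$ has only $\oplus$ vertices says the oriented Seifert fibers are positively transverse to the fiber surface, so I can choose a contact form whose Reeb vector field is positively transverse to the pages and, away from the binding, runs in the positive Seifert-fiber direction $H$. The assumption that all inner edges carry $+$ signs guarantees the splice components have non-negative multiplicity, so along each splicing torus $T_j$ the boundary slopes prescribed by \eqref{eqab} match and the local forms glue to a global form compatible with $L(\Gamma)$ on the complement of small solid tori $N(S_i)$ around the arrowhead components.

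Next I would localize at a component $S_i$ with $m_i<0$ and describe the form on $N(S_i)$ by a clockwise curve $\gamma(r)=(-h_1(r),h_2(r))$ as in Figure~\ref{fig0}. Because $m_i<0$, compatibility forces $R_\alpha$ to be tangent to $S_i$ opposite to the working orientation, i.e. $R_\alpha=-\frac{\bd}{\bd\lambda}$ at the core; reading Figure~\ref{fig0} backwards, this pins $\gamma(0)$ on the positive $x$-axis. At the outer end the surrounding $\oplus$ structure pins $\gamma(1)$: there $R_\alpha$ is essentially $+H=\sigma_i\fM_i+a_i\fL_i$, whose slope lies in the first quadrant since the denominators are positive. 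The requirement that $d\alpha$ be a volume form on the $|m_i|$ pages meeting $S_i$ translates, via $R_\alpha\parallel\gamma'$, into the condition that $\gamma'(r)$ lie in the fixed half-plane $\{m_iX+q_iY>0\}$ determined by the fibration slope, where $q_i$ is its longitudinal winding and $\oplus$ forces $q_i>0$.

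The heart of the argument is to show that these end conditions, together with the clockwise constraint and the $a_i$-fold rotational monodromy of the Seifert fibration around the exceptional fiber $S_i$ (note $a_i>1$, since $\oplus$ with $m_i<0$ is impossible for a regular fiber), force $\gamma$ to meet the positive $x$-axis at some $r_3\in(0,1)$. For a positive component $\gamma$ would leave the negative $x$-axis and reach the first-quadrant boundary slope through a short clockwise arc; the sign $m_i<0$ instead moves the starting ray onto the positive $x$-axis, and I expect the monodromy to add an extra half-turn to the rotation number of $\gamma$. Tracking this turning number from the data $(a_i,\sigma_i,b_i)$, the multiplicity $m_i$, and the fiber degree $d=\sigma_i m_i+a_i q_i$ should show the total clockwise rotation carries $\gamma$ back across the positive $x$-axis, producing the half Lutz twist and hence the overtwisted disk $\{\lambda=\mathrm{const},\,r\le r_3\}$. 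Thus $\ker\alpha$ is overtwisted.

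The main obstacle is exactly this last rotation-number computation near the exceptional fiber: I must quantify the contribution of the $(a_i,b_i)$-monodromy and of the sign of $m_i$ to the winding of $\gamma$ precisely enough to guarantee a genuine crossing of the positive $x$-axis, and not merely the degenerate tangency already present at the core. A secondary technical point will be to verify that the Seifert-adapted pieces really do glue across the splicing tori under the $+$-edge hypothesis, so that the datum $\gamma(1)$ near $S_i$ is genuinely controlled by the ambient $\oplus$ structure rather than left free.
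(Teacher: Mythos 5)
Your overall architecture is the same as the paper's (build an explicit compatible contact form whose Reeb field is tangent to the Seifert fibers on each piece, then exhibit a half Lutz twist around each negative component), but the step you single out as ``the heart of the argument'' is both misdirected and missing. No rotation-number computation involving the $(a_i,b_i)$-monodromy is needed, and the claim that $\gamma$ must return to the \emph{positive} $x$-axis at some $r_3\in(0,1)$ is not what happens: a curve starting at $(c,0)$ and rotating monotonously clockwise into the lower half-plane produces an overtwisted meridian disk already at its first crossing of the $x$-axis at positive radius (where $h_2(r_3)=0$, so the disk $\{\lambda=\mathrm{const},\,r\le r_3\}$ is tangent to $\xi$ along its boundary). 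The only substantive input is the single inequality $h_2(1)>0$ at the boundary of $N(S_i)$: combined with $\gamma(0)=(c,0)$ (forced by $m_i<0$) and monotone clockwise rotation, it immediately yields the half Lutz twist. But you do not actually establish $h_2(1)>0$. You argue that the $\oplus$ condition ``pins $\gamma(1)$'' because $R_\alpha$ is essentially $+H=\sigma_i\fM_i+a_i\fL_i$; this conflates the position $\gamma(1)=(-h_1(1),h_2(1))$, which is determined by $\alpha$ itself, with the velocity $\gamma'(1)=(-h_1'(1),h_2'(1))$, which is what $R_\alpha$ determines. In the paper $h_2(1)=a_i\bigl(\tfrac{b_i}{a_i}+R_i\bigr)>0$ comes from a careful choice of the residues $R_i$ of a $1$-form $\beta$ on the base surface $\mathcal S$ (Lemma~\ref{lemma2012}, using $\sum_i(-b_i/a_i)+1/A=0$), not from the direction of the Reeb field. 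Your aside that $a_i>1$ is forced is likewise not needed anywhere.

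The second gap is the gluing across the splicing tori, which you defer as a ``secondary technical point'' but which is where the hypotheses on $\hat\Gamma$ actually do work. After the splice identification $(\fM,\fL)\mapsto(\fL,\fM)$ the roles of the meridional and longitudinal coefficients are exchanged, and to interpolate between the boundary data of the two Seifert pieces by a single clockwise, monotonously rotating arc (needed both for the contact condition and for compatibility with the fixed page slope on $N(T)$) one must be able to make the ratio $h_{1,1}(1)/h_{1,2}(1)$ at the splicing component arbitrarily small. This is exactly the point of conditions (3)--(5) of Lemmas~\ref{lemma2011} and~\ref{lemma2012} (the choice $R_1=-\tfrac{b_1}{a_1}+\tfrac1A-\ve'$), after which Lemma~\ref{lemma2010} runs an induction over the splice decomposition and checks that the inequality $h_2(1)>0$ survives at every remaining link component. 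Without this normalization the two arcs need not be connectable monotonically, and without propagating $h_2(1)>0$ through the induction the final half-Lutz-twist conclusion has no basis. So the plan points in the right direction, but both load-bearing steps are absent, and the one you propose to carry out in their place is not the right computation.
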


We prove this assertion by showing the overtwisted contact structure 
explicitly according to the construction of a graph multilink in Section~2.
Let $C_i$ denote the boundary component $(-\bd \mathcal S)\cap D^2_i$ of $\mathcal S$.
The next lemma is a refinement of~\cite[Lemma~4.4]{ishikawa2}, 
where we added the conditions~(4) and~(5) for our purpose.

\begin{lemma}\label{lemma2012}
Suppose $A>0$ and $n\geq 2$. For $i=1,\cdots,n$, let $U_i$ be a collar neighborhood of $C_i$ in $\mathcal S$ 
with coordinates $(r_i,\theta_i)\in [1,2)\times S^1$
satisfying $\{(r_i,\theta_i)\mid r_i=1\}=C_i$.
Then, for a sufficiently small $\ve'>0$,
there exists a $1$-form $\beta$ on $\mathcal S$ which satisfies the following properties:
\begin{itemize}
\item[(1)] $d\beta>0$ on $\mathcal S$.
\item[(2)] If $i\geq 2$ and $\frac{b_i}{a_i}\leq 0$  then 
$\beta=R_ir_id\theta_i$ with $-\frac{b_i}{a_i}<R_i$ near $C_i$ on $U_i$.
\item[(3)] If $i\geq 2$ and $\frac{b_i}{a_i}>0$ then 
$\beta=\frac{R_i}{r_i}d\theta_i$ with $-\frac{b_i}{a_i}<R_i<0$ near $C_i$ on $U_i$.
\item[(4)] If $\frac{b_1}{a_1}\leq 0$ then 
$\beta=R_1r_1d\theta_1$ with $R_1=-\frac{b_1}{a_1}+\frac{1}{A}-\ve'>0$ near $C_1$ on $U_1$.
\item[(5)] If $\frac{b_1}{a_1}>0$ then 
$\beta=\frac{R_1}{r_1}d\theta_1$ with $R_1=-\frac{b_1}{a_1}+\frac{1}{A}-\ve'<0$ near $C_1$ on $U_1$.
\end{itemize}
\end{lemma}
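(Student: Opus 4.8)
The plan is to construct $\beta$ locally on each collar $U_i$ first, in the prescribed form, and then to patch these local pieces together over the rest of $\mathcal S$ while maintaining $d\beta>0$. This is exactly the strategy of \cite[Lemma~4.4]{ishikawa2}, so I would treat that lemma as the backbone and concentrate on verifying that the two new conditions (4) and (5) are compatible with the construction. The key observation is that $d\beta>0$ is an open condition that only constrains $\beta$ up to adding any closed $1$-form, so the freedom in choosing $\beta$ near each $C_i$ is exactly the freedom in choosing the cohomology class, i.e.\ the integral (``flux'') of $\beta$ around each boundary circle $C_i$. Hence the whole lemma reduces to a bookkeeping statement about these fluxes.

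First I would set up the flux computation. On $U_i$, for the two model forms $R_i r_i\,d\theta_i$ and $\frac{R_i}{r_i}d\theta_i$, the period $\oint_{C_i}\beta$ is $R_i$ in either case (at $r_i=1$). The constraint that $\beta$ extend to a $1$-form with $d\beta>0$ on all of $\mathcal S$ is governed by Stokes: since $\mathcal S=S^2\setminus\bigsqcup\operatorname{int}D_i^2$ and $\partial\mathcal S=\bigcup(-C_i)$ with the orientations as in the definition of $C_i$, we have $\int_{\mathcal S}d\beta=-\sum_{i=1}^k\oint_{C_i}\beta$, and positivity of $d\beta$ forces $\sum_i R_i<0$ (with the appropriate sign conventions). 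I would check that the prescribed values in (2)--(5), namely $R_i>-\frac{b_i}{a_i}$ or $-\frac{b_i}{a_i}<R_i<0$ for $i\geq 2$ together with the \emph{exact} value $R_1=-\frac{b_1}{a_1}+\frac{1}{A}-\ve'$, are consistent with this sign constraint. The crucial arithmetic identity is $\sum_{i=1}^k b_i a_1\cdots\hat a_i\cdots a_k=1$, equivalently $\sum_i \frac{b_i}{a_i}=\frac{1}{A}$, which is precisely what makes the special value $\frac{1}{A}-\ve'$ in (4)/(5) work: summing the constraints, the leading $-\sum_i\frac{b_i}{a_i}=-\frac{1}{A}$ terms cancel against the $+\frac{1}{A}$ in $R_1$, leaving a net negative $-\ve'$ with room to spare from the strict inequalities at the other boundaries.

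Having fixed admissible periods $R_i$, the interpolation step is routine: on each collar I take $\beta$ equal to the model form near $C_i$, and on the compact region $\mathcal S\setminus\bigcup U_i$ I choose any primitive-type extension realizing a volume form $d\beta>0$ with the prescribed boundary periods; existence follows from the standard fact that on a surface with boundary one can prescribe the periods of a positively-oriented area form's primitive on the boundary circles provided the sum condition from Stokes is met. The only genuinely new content beyond \cite[Lemma~4.4]{ishikawa2} is isolating component $1$ and pinning $R_1$ to an exact value rather than an open interval; I would verify that this is still an admissible choice by noting that $R_1=-\frac{b_1}{a_1}+\frac{1}{A}-\ve'$ satisfies $R_1>-\frac{b_1}{a_1}$ (since $\frac{1}{A}-\ve'>0$ for small $\ve'$, using $A>0$) when $\frac{b_1}{a_1}\leq 0$, and the sign requirement $R_1<0$ in case $\frac{b_1}{a_1}>0$ holds for $\ve'$ small exactly when $\frac{b_1}{a_1}>\frac{1}{A}$, which I must confirm is guaranteed in the relevant range.

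The main obstacle I anticipate is the last compatibility check: making sure that the exact value forced in (4)/(5) for $C_1$ does not over-constrain the system, i.e.\ that the strict inequalities allotted to $C_2,\dots,C_n$ leave enough slack in the total flux to still realize $d\beta>0$ after $R_1$ is fixed. This is where the hypothesis $A>0$ and the relation $a_i\delta_i+b_i\sigma_i=1$ must be used carefully, and where a sign error in the orientation conventions for $C_i$ versus $Q_i$ would be fatal. I would handle this by computing the total flux $\sum_i R_i$ symbolically, substituting the extreme admissible values for the free $R_i$ ($i\geq 2$), and showing the sum stays strictly negative with the chosen $R_1$; the parameter $\ve'$ is there precisely to absorb the equality case, so the verification should close once the arithmetic of $\sum_i \frac{b_i}{a_i}=\frac{1}{A}$ is tracked correctly.
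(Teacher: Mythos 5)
Your proposal takes essentially the same route as the paper's proof, which consists precisely of observing the identity $\bigl(-\tfrac{b_1}{a_1}+\tfrac{1}{A}\bigr)+\sum_{i=2}^{k}\bigl(-\tfrac{b_i}{a_i}\bigr)=0$ (your flux identity $\sum_i\tfrac{b_i}{a_i}=\tfrac{1}{A}$), choosing the periods $R_i$ accordingly with the $R_i$ for $i\geq 2$ close enough to $-\tfrac{b_i}{a_i}$ that their excess is absorbed by the $-\ve'$, and deferring the extension over $\mathcal S$ to \cite[Lemma~4.4]{ishikawa2}. The one point you left unconfirmed does hold: since all $a_i>0$, the case $\tfrac{b_1}{a_1}>0$ gives $b_1\geq 1$ and $\sigma_1=a_2\cdots a_k\geq 1$, hence $a_1\delta_1=1-b_1\sigma_1\leq 0$ and $-\tfrac{b_1}{a_1}+\tfrac{1}{A}=\tfrac{\delta_1}{\sigma_1}\leq 0$, so that $R_1\leq -\ve'<0$ as required in~(5).
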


\begin{proof}
Since $\left(-\frac{b_1}{a_1}+\frac{1}{A}\right)+\sum_{i=2}^k\left(-\frac{b_i}{a_i}\right)=0$,
we can choose $R_1,\cdots,R_k$ such that they satisfy the above conditions.
The rest of the proof is same as that of~\cite[Lemma~4.4]{ishikawa2}.
\end{proof}

\begin{lemma}\label{lemma2011}
Let $(\Sigma,L(\m))=(\Sigma(a_1,\cdots,a_k), m_1S_1\cup\cdots\cup m_nS_n)$ 
be a fibered PT Seifert multilink in a homology $3$-sphere $\Sigma$.
Suppose that the fibers of the Seifert fibration intersect the interiors of the
fiber surfaces of $L(\m)$ positively transversely.
For a sufficiently small positive real number $\ve$ given, 
there exists a positive contact form $\alpha$ on $\Sigma$ with the following properties:
\begin{itemize}
\item[(1)] $L(\m)$ is compatible with the contact structure $\ker\alpha$.
\item[(2)] The Reeb vector field $R_{\alpha}$ of $\alpha$ 
is tangent to the fibers of the Seifert fibration on $\mathcal S\times S^1$.
\item[(3)] On a neighborhood of $\bd (D^2\times S^1)_1$, 
$\alpha$ is given as $\alpha=h_{1,2}(r_1)d\mu_1+h_{1,1}(r_1)d\lambda_1$ with
$h_{1,1}(1)/h_{1,2}(1)=\ve$ and $h_{1,2}(1)>0$.
\item[(4)] On a neighborhood of $\bd (D^2\times S^1)_i$ for $i=2,\cdots,n$,
$\alpha=h_{i,2}(r_i)d\mu_i+h_{i,1}(r_i)d\lambda_i$ with $h_{i,2}(1)>0$.
\end{itemize}
Here $(r_i,\mu_i,\lambda_i)$ are coordinates of $(D^2\times S^1)_i$ chosen such that
$(r_i,\mu_i)$ are the polar coordinates of $D^2$ of radius $1$ and the orientation of $\lambda_i$ 
agrees with the working orientation of $S_i$,
and $h_{i,1}$ and $h_{i,2}$ are real-valued smooth functions with parameter $r_i$.
\end{lemma}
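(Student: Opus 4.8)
The plan is to build $\alpha$ first on the product region $\mathcal S\times S^1$ out of the $1$-form $\beta$ provided by Lemma~\ref{lemma2012}, and then to extend it across each gluing solid torus $(D^2\times S^1)_i$ by the curve construction recalled at the end of Section~2. Writing $t$ for the fiber coordinate of $\mathcal S\times S^1\to\mathcal S$, oriented to agree with the Seifert fibration (hence with the working orientations, by our standing positivity assumption), I would set $\alpha=\beta+dt$ on $\mathcal S\times S^1$. Since $\beta$ and $d\beta$ are pulled back from the surface $\mathcal S$ we have $\beta\wedge d\beta=0$, whence $\alpha\wedge d\alpha=dt\wedge d\beta>0$ by property~(1) of Lemma~\ref{lemma2012}; moreover $\alpha(\partial_t)=1$ and $d\alpha(\partial_t,\cdot)=d\beta(\partial_t,\cdot)=0$, so $R_\alpha=\partial_t$ is tangent to the fibers. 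This already yields property~(2).

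Next I would read off the boundary value of $\alpha$ on each torus $T_i=\bd(D^2\times S^1)_i=C_i\times S^1$. Under the identification in which the $\theta_i$-direction is $Q_i$ and the $t$-direction is the fiber $H$, properties~(2)--(5) of Lemma~\ref{lemma2012} give $\alpha|_{T_i}=R_i\,d\theta_i+dt$ at $r_i=1$. Dualising the change of basis~\eqref{eqab} between $(Q_i,H)$ and $(\fM_i,\fL_i)$ and recalling that $d\mu_i,d\lambda_i$ are dual to $\fM_i,\fL_i$, this becomes
\[
   \alpha|_{T_i}=(b_i+a_iR_i)\,d\mu_i+(\delta_i-\sigma_iR_i)\,d\lambda_i,
\]
so that $h_{i,2}(1)=b_i+a_iR_i$ and $h_{i,1}(1)=\delta_i-\sigma_iR_i$. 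For $i\geq 2$, properties~(2) and~(3) of Lemma~\ref{lemma2012} give $R_i+\frac{b_i}{a_i}>0$, hence $h_{i,2}(1)=a_i\bigl(R_i+\frac{b_i}{a_i}\bigr)>0$, which is property~(4). For $i=1$, substituting $R_1=-\frac{b_1}{a_1}+\frac{1}{A}-\ve'$ and using $a_1\delta_1+b_1\sigma_1=1$ and $\sigma_1=A/a_1$, I compute $h_{1,2}(1)=\frac{a_1}{A}-a_1\ve'>0$ and $h_{1,1}(1)=\sigma_1\ve'$, so $h_{1,1}(1)/h_{1,2}(1)=\ve'\sigma_1^2/(1-a_1\sigma_1\ve')$. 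Since the right-hand side is continuous and positive and tends to $0$ as $\ve'\to 0^+$, I can choose $\ve'$ so that this ratio equals the prescribed $\ve$, giving property~(3). This is precisely the use of the refinement in conditions~(4) and~(5) of Lemma~\ref{lemma2012}, which pin down $R_1$ while leaving the signs of the remaining $R_i$ free.

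It then remains to extend $\alpha$ across each $(D^2\times S^1)_i$ so that property~(1) holds, and this is the step I expect to be the \textbf{main obstacle}. I would specify $h_{i,1},h_{i,2}$ through a curve $\gamma_i(r_i)=(-h_{i,1}(r_i),h_{i,2}(r_i))$ that agrees at $r_i=1$ with the boundary point computed above (which lies in the upper half-plane, since $h_{i,2}(1)>0$) and, moving clockwise so that $h_{i,1}h_{i,2}'-h_{i,2}h_{i,1}'>0$ and the contact condition holds, reaches the core $S_i$ at $r_i=0$ in the standard model $(-h_{i,1},h_{i,2})=(\mp c,\pm r_i^2)$. The amount of winding and the side on which $\gamma_i$ terminates must be arranged so that $R_\alpha$ is tangent to $S_i$ in the direction prescribed by the sign of $m_i$, so that the fiber surface has exactly $|m_i|$ leaves along $S_i$, and so that $R_\alpha$ is positively transverse to the interior of the fiber surface, i.e.\ so that $L(\m)$ is compatible with $\ker\alpha$ in the sense of~\cite[Lemma~3.2]{ishikawa2}. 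The hypothesis that the Seifert fibers meet the fiber surface positively transversely is what guarantees that the boundary slope of the fiber surface on $T_i$ lies on the correct side of the prescribed contact slope, so that such a clockwise $\gamma_i$ joining the boundary value to the core model indeed exists. Carrying out this extension exactly as in the construction of~\cite{ishikawa2}, now with the refined boundary data above, completes the proof.
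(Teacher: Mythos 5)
Your proposal is correct and follows essentially the same route as the paper: the contact form $\alpha_0=\beta+dt$ on $\mathcal S\times S^1$ built from Lemma~\ref{lemma2012}, the identical boundary computation $h_{i,2}(1)=b_i+a_iR_i$, $h_{1,1}(1)/h_{1,2}(1)=A\ve'/\bigl(a_1^2(\tfrac{1}{A}-\ve')\bigr)\to 0$ forcing the choice of $\ve'$, and the extension into each $(D^2\times S^1)_i$ by a clockwise, monotonously rotating curve ending at $(\mp c_i,\pm r_i^2)$ according to the sign of $m_i$. The extension step you flag as the main obstacle is handled in the paper exactly as you describe, with the compatibility verified by inspection of the relative positions of the Reeb direction and the fiber-surface slope.
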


\begin{proof}
Let $B_i=[1,2)\times S^1\times S^1$ be a neighborhood of $\bd (D^2\times S^1)_i$ in
$\mathcal S\times S^1$ with coordinates $(r_i,\theta_i,t)$.
The solid torus $(D^2\times S^1)_i$ is glued to $B_i$ as
\[
   \mu_i \fM_i+\lambda_i \fL_i=(a_i\mu_i-\sigma_i\lambda_i)Q_i+(b_i\mu_i+\delta_i\lambda_i)H,
\]
where $(\fM_i,\fL_i)$ is the standard meridian-longitude pair on $\bd (D^2\times S^1)_i$,
$Q_i$ is the oriented curve given by $\{1\}\times S^1\times\{\text{a point}\}\subset\bd B_i$,
$H$ is a typical fiber of the projection $[1,2)\times S^1\times S^1\to [2,1)\times S^1$
which omits the third entry, and $a_i, b_i, \sigma_i, \delta_i\in\Z$ are
given according to relations~\eqref{eqab}.
As in the proof of~\cite[Proposition~4.1]{ishikawa2},
the contact form on $\mathcal S\times S^1$ is set as $\alpha_0=\beta+dt$, 
where $\beta$ is a $1$-form chosen in Lemma~\ref{lemma2012}.
The Reeb vector field $R_{\alpha_0}=\frac{\bd}{\bd t}$ satisfies the condition~(1)
in the assertion on $\mathcal S\times S^1$.

For the gluing map $\varphi_i$ of $(D^2\times S^1)_i$ to $B_i$, 
$\varphi_i^*\alpha_0$ is given as
\[
\begin{split}
   \varphi^*_i\alpha_0
   &=R_ir_id(a_i\mu_i-\sigma_i\lambda_i)+d(b_i\mu_i+\delta_i\lambda_i)
    =(b_i+a_iR_ir_i)d\mu_i+(\delta_i-\sigma_i R_ir_i)d\lambda_i \\
   &=a_i\left(\frac{b_i}{a_i}+R_i r_i\right)d\mu_i
   +\frac{1}{a_i}\left(1-a_i\sigma_i\left(\frac{b_i}{a_i}+R_ir_i\right)\right)d\lambda_i.
\end{split}
\]
Hence the inequality $h_{i,2}(1)>0$ holds for $i=1,\cdots,k$.
Moreover, since $R_1$ is chosen as
$R_1=-\frac{b_1}{a_1}+\frac{1}{A}-\ve'$ in Lemma~\ref{lemma2012}, we have
\[
\begin{split}
    h_{1,1}(1)&=\frac{1}{a_1}\left(1-a_1\sigma_1\left(\frac{b_1}{a_1}+R_1\right)\right) 
          =\frac{1}{a_1}\left(1-A\left(\frac{1}{A}-\ve'\right)\right) 
          =\frac{A\ve'}{a_1}, \\
   h_{1,2}(1)&=a_1\left(\frac{b_1}{a_1}+R_1\right) 
         =a_1\left(\frac{1}{A}-\ve'\right)
\end{split}
\]
and hence
\[
   \frac{h_{1,1}(1)}{h_{1,2}(1)}=\frac{A\ve'}{a_1^2\left(\frac{1}{A}-\ve'\right)}>0.
\]
Since $\lim_{\ve'\to 0}h_{1,1}(1)/h_{1,2}(1)=0$, we can choose $\ve'>0$ such that $h_{1,1}(1)/h_{1,2}(1)=\ve$.

We finally extend the contact form $\alpha_0$ on $\mathcal S\times S^1$ into each $(D^2\times S^1)_i$.
If $m_i>0$ then we describe a curve $\gamma_i(r_i)=(-h_{i,1}(r_i), h_{i,2}(r_i))$ on 
the $xy$-plane representing a positive contact form on $(D^2\times S^1)_i$ in such a way that
\begin{itemize}
\item $(-h_{1,i}, h_{2,i})=(-c_i, r_i^2)$ near $r_i=0$ with some constant $c_i>0$,
\item $h_{i,2}d\mu_i+h_{i,1}d\lambda_i=\varphi_i^*\alpha_0$ near $r_i=1$, and
\item $\gamma_i'(r_i)$ rotates monotonously.
\end{itemize}
The Reeb vector field of this contact form is positively transverse to the interiors of
the fiber surfaces of $L(\m)$ on $(D^2\times S^1)_i$ as shown in Figure~\ref{fig1d}.
The same observation works even in the case where $m_i=0$.
\begin{figure}[htbp]
   \centerline{\input{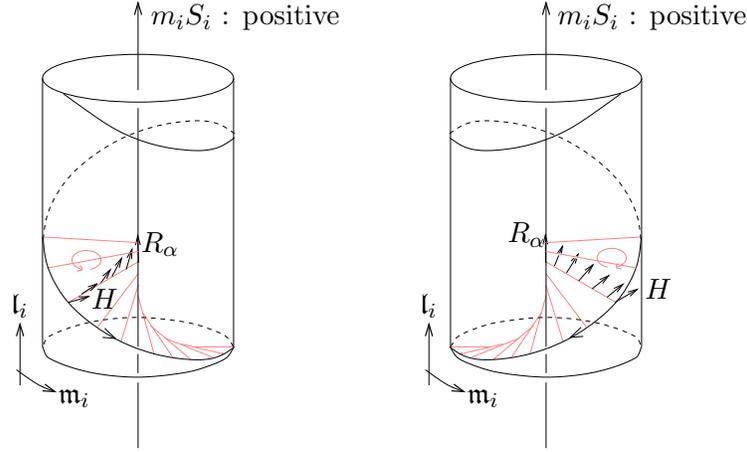}}
   \caption{The compatibility in $(D^2\times S^1)_i$ with $m_i>0$.\label{fig1d}}
\end{figure}

If $m_i<0$ then we describe a curve on
the $xy$-plane representing a positive contact form on $(D^2\times S^1)_i$ in such a way that
\begin{itemize}
\item $(-h_{1,i}, h_{2,i})=(c_i, -r_i^2)$ near $r_i=0$ with some constant $c_i>0$,
\item $h_{i,2}d\mu_i+h_{i,1}d\lambda_i=\varphi_i^*\alpha_0$ near $r_i=1$, and
\item $\gamma_i'(r_i)$ rotates monotonously.
\end{itemize}
In this case, we also have the positive transversality as shown in Figure~\ref{fig1e}.
This completes the proof.
\end{proof}

\begin{figure}[htbp]
   \centerline{\input{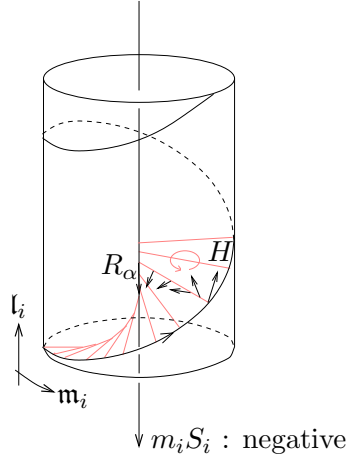}}
   \caption{The compatibility in $(D^2\times S^1)_i$ with $m_i<0$.\label{fig1e}}
\end{figure}

\begin{lemma}\label{lemma2010}
Let $(\Sigma,L(\Gamma))$ be a fibered PT graph multilink of a minimal splice diagram $\Gamma$
not of type $\leftrightarrow$.
Suppose that $\hat\Gamma$ has only $\oplus$ vertices and that all inner edges have only $+$ signs. 
Then there exists a positive contact form $\alpha$ on $\Sigma$ with the following properties:
\begin{itemize}
\item[(1)] $L(\Gamma)$ is compatible with the contact structure $\ker\alpha$.
\item[(2)] Each component $m_iS_i$ of $L(\Gamma)$ with negative multiplicity
has a tubular neighborhood which contains a half Lutz twist.
In particular, it contains an overtwisted disk.
\end{itemize}
\end{lemma}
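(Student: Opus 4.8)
The plan is to build $\alpha$ by assembling the local models of Lemma~\ref{lemma2011}, one on each Seifert piece, and gluing them along the splicing tori. Decompose $\Sigma$ along the tori $T_1,\dots,T_\tau$ dual to the inner edges of $\Gamma$ into its Seifert pieces, as in Section~2, and let $\Sigma(a_1,\dots,a_k)$ be the Seifert fibered homology sphere underlying a given piece. The hypothesis that $\hat\Gamma$ has only $\oplus$ vertices says precisely that in that piece the fibers of the Seifert fibration are positively transverse to the interiors of the fiber surfaces; hence Lemma~\ref{lemma2011} applies and produces a positive contact form whose Reeb field is tangent to the fibers on the $\mathcal S\times S^1$ part, which has slope $h_{1,1}(1)/h_{1,2}(1)=\ve$ on the distinguished boundary $\bd(D^2\times S^1)_1$ and satisfies $h_{i,2}(1)>0$ on every remaining boundary torus.

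I would then glue these forms across the $T_j$. As $\Gamma$ is a tree, I root it and, for the piece at each inner vertex, take the boundary lying on the edge toward the root to be the distinguished boundary of Lemma~\ref{lemma2011} (slope $\ve$), the others being the $C_i$ with $i\ge 2$. Along a splicing torus $T_j$ the splice identifies $(\fM,\fL)$ on one side with $(\fL,\fM)$ on the other, so the two boundary curve values $\gamma_i(1)=(-h_{i,1}(1),h_{i,2}(1))$ are read in swapped coordinates; using the sign prescriptions~(2)--(5) of Lemma~\ref{lemma2012}, which fix the quadrant of each $\gamma_i(1)$, together with $h_{i,2}(1)>0$ and the relation $a_i\delta_i+b_i\sigma_i=1$, I would join them by a monotone clockwise arc in a collar of $T_j$, extending $\alpha$ with $d\alpha>0$. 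Here the hypothesis that every inner edge carries a $+$ sign is essential: it places each such $\gamma_i(1)$ in the configuration of Figure~\ref{fig1d} rather than Figure~\ref{fig1e}, so the interpolating arc can be chosen without meeting the positive $x$-axis and $R_\alpha$ stays positively transverse to the fiber surface across $T_j$. I expect this to be the main obstacle: each individual gluing is the one used in~\cite[Proposition~4.1]{ishikawa2}, but one must choose the root together with the parameters $R_i$ of Lemma~\ref{lemma2012} and $\ve$ of Lemma~\ref{lemma2011} so that the slope and sign conditions are simultaneously compatible at every $T_j$, and check that this propagates through the whole tree.

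Granting the gluing, $\alpha$ is a global positive contact form, and compatibility of $L(\Gamma)$ with $\ker\alpha$, part~(1), is local: it holds on each Seifert piece by Lemma~\ref{lemma2011}, and the collar interpolations were arranged to keep $R_\alpha$ tangent to the link components in the correct direction and positively transverse to the interiors of the fiber surfaces, so it holds on all of $\Sigma$.

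For part~(2), let $m_iS_i$ be a component with $m_i<0$; it is an arrowhead component, lying in the interior of a Seifert piece with a solid-torus neighborhood $(D^2\times S^1)_i$ that no splicing removes. On $(D^2\times S^1)_i$ the form is the one read in Lemma~\ref{lemma2011} from the curve of Figure~\ref{fig1e}, which caps the core from the positive $x$-axis side, $(-h_{1,i},h_{2,i})=(c_i,-r_i^2)$ near $r_i=0$; thus $h_{i,2}<0$ for small $r_i>0$ while $h_{i,2}(1)>0$, so $h_{i,2}(r_3)=0$ for some $r_3\in(0,1)$. Then the meridian $\{r_i=r_3,\ \lambda_i=\mathrm{const}\}$ is tangent to $\ker\alpha$ and the characteristic foliation on the meridional disk $\{r_i\le r_3,\ \lambda_i=\mathrm{const}\}$ is radial with that boundary as a closed leaf; this is the overtwisted disk of a half Lutz twist, which gives part~(2) and completes the proof.
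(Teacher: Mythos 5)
Your proposal is correct and follows essentially the same route as the paper: there, too, $\alpha$ is assembled from the local forms of Lemma~\ref{lemma2011} on the Seifert pieces, with the distinguished slope-$\ve$ boundary of each new piece facing the already-constructed part, the pieces are joined by monotone clockwise arcs on the $xy$-plane, and the half Lutz twist at each negative component is read off from the starting point $(c,0)$ together with $h_2(1)>0$. The ``simultaneous compatibility'' issue you flag as the main obstacle is resolved in the paper exactly by the mechanism you name, organized as an induction over successive splices: each torus is glued one at a time, so the parameters ($\ve$ sufficiently small and an overall positive rescaling of $\hat\alpha_\ell$) can be chosen independently at each step.
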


\begin{proof}
Let $L_1(\m_1),\cdots,L_{\ell'}(\m_{\ell'})$ denote the fibered Seifert multilinks before the 
splicing of $L(\Gamma)$.
Suppose that $L(\Gamma)$ is obtained from $L_1(\m_1)$ by splicing $L_2(\m),\cdots,L_{\ell'}(\m_{\ell'})$
successively. The assertion for $L_1(\m_1)$ had been proved in~\cite[Proposition~4.1]{ishikawa2}.
Assume that the assertion holds for the fibered PT graph multilink $(\Sigma_{\ell-1},L(\Gamma_{\ell-1}))$
obtained from $L_1(\m_1)$ by splicing $L_2(\m_2),\cdots,L_{\ell-1}(\m_{\ell-1})$ successively,
where $2\leq\ell\leq\ell'$, and then consider the next splice 
\[
     (\Sigma_\ell,L(\Gamma_\ell))=\left[(\Sigma_{\ell-1},L(\Gamma_{\ell-1})) \textstyle\frac{\;\;\;\;\;\;}{S\;\;\;\;\;S_{\ell,1}} (\Sigma', L_\ell(\m_\ell))\right],
%     (\Sigma_1, L(\Gamma_1)) \underset{S_1\;\;\;\;S_2}{\text{\;------\;}} (\Sigma_2, L(\Gamma_2)).
\]
where $\Sigma'$ is the Seifert fibered homology $3$-sphere containing $L_\ell(\m_\ell)$.

Let $\hat\alpha_\ell$ be a contact form on $\Sigma'$ obtained according to Lemma~\ref{lemma2011}, where $(D^2\times S^1)_1$ in the lemma corresponds to the neighborhood $N(S_{\ell,1})$ of
$S_{\ell,1}$ for the splicing. From the lemma, $\hat\alpha_\ell$ has the form 
$\hat\alpha_\ell=h_{\ell,2}(r_\ell)d\mu_\ell+h_{\ell,1}(r_\ell)d\lambda_\ell$
near $\bd N(S_{\ell,1})$
with $h_{\ell,1}(1)/h_{\ell,2}(1)>0$ being sufficiently small and $h_{\ell,2}(1)>0$,
where $(r_\ell,\mu_\ell,\lambda_\ell)$ are coordinates of $N(S_{\ell,1})=D^2\times S^1$ chosen such that
$(r_\ell,\mu_\ell)$ are the polar coordinates of $D^2$ of radius $1$ and
the orientation of $\lambda_\ell$ agrees with the working orientation of $S_{\ell,1}$.

On the other hand, there is a contact form $\alpha_{\ell-1}$ on $\Sigma_{\ell-1}$ 
satisfying the required properties by the assumption of the induction.
Set the neighborhood $N(S)$ of $S$ for the splicing to be the neighborhood
specified in Lemma~\ref{lemma2011} in the inductive construction of $L(\Gamma_{\ell-1})$.
Then, on a small neighborhood of $\bd N(S)$, $\alpha_{\ell-1}$ has the form 
\begin{equation}\label{eq1001}
   \alpha_{\ell-1}=h_2(r)d\mu+h_1(r)d\lambda
\end{equation}
with $h_2(1)>0$,
where $(r,\mu,\lambda)$ are coordinates of $N(S)=D^2\times S^1$ chosen such that
$(r,\mu)$ are the polar coordinates of $D^2$ of radius $1$ and
the orientation of $\lambda$ agrees with the working orientation of $S$.
Set the gluing map of the splice as
$(r_\ell,\mu_\ell,\lambda_\ell)=(2-r,\lambda,\mu)$, then
on a small neighborhood $N(T)$ of the torus $T=\bd N(S_{\ell,1})=\bd N(S)$ for the splicing
 we have
\begin{equation}\label{eq1002}
   \hat\alpha_\ell=h_{\ell,1}(2-r)d\mu+h_{\ell,2}(2-r)d\lambda.
\end{equation}

Now we plot the two points corresponding to the contact forms~\eqref{eq1001} and~\eqref{eq1002}
on the $xy$-plane and describe a curve connecting them to obtain a contact form on $N(T)$
gluing $\alpha_{\ell-1}$ and $\hat\alpha_\ell$ smoothly.
The Reeb vector fields of $\alpha_{\ell-1}$ and $\hat\alpha_\ell$ were chosen such that
they are positively transverse to the fiber surfaces of $L(\Gamma_\ell)$ in $N(T)$.
Recall that
$h_2(1)>0, h_{\ell,2}(1)>0$ and that $h_{\ell,1}(1)/h_{\ell,2}(1)>0$ can be sufficiently small.
Since $\hat\alpha_{\ell-1}$ at $r=1$ corresponds to the point
$(-h_{\ell,2}(1), h_{\ell,1}(1))$ on the $xy$-plane representing a contact form on $N(S)$,
by choosing $h_{\ell,1}(1)/h_{\ell,2}(1)>0$ sufficiently small and
multiplying a positive constant to $\hat\alpha_\ell$ if necessary,
we can describe a curve $\gamma(r)$ on that $xy$-plane which defines a positive 
contact form $\alpha_{N(T)}$ on $N(T)$ connecting $\alpha_{\ell-1}$ and $\hat\alpha_\ell$ 
smoothly and whose speed vector $\gamma'(r)$ rotates monotonously, see Figure~\ref{fig1b}. 
Set the slope of the fiber surface of $L(\Gamma_\ell)$ to be constant in $N(T)$, then
the monotonous rotation of $\gamma'(r)$ ensures that $\ker\alpha_{N(T)}$ is
compatible with $L(\Gamma_\ell)$ on $N(T)$. 
We then glue $\alpha_{\ell-1}$ and $\hat\alpha_\ell$ by $\alpha_{N(T)}$
and obtain a contact form $\alpha_\ell$ on $\Sigma_\ell$.
Since the Reeb vector field of $\alpha_\ell$ satisfies the compatibility condition
outside $N(T)$, $\ker\alpha_\ell$ is compatible with $L(\Gamma_\ell)$ on
the whole $\Sigma_\ell$.
\begin{figure}[htbp]
   \centerline{\input{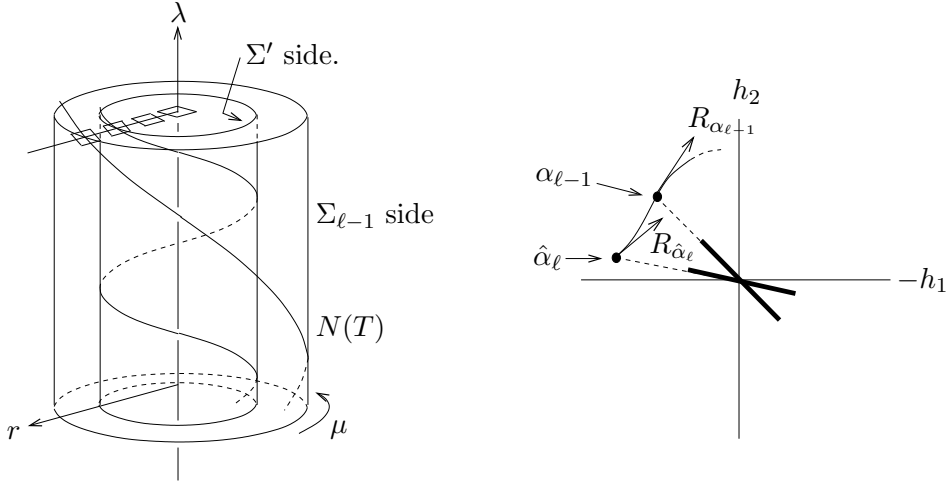}}
   \caption{Connect the contact forms $\hat\alpha_{\ell-1}$ and $\alpha_{\ell}$ smoothly.\label{fig1b}}
\end{figure}

By induction, we obtain a contact form $\alpha$ whose kernel is compatible with $L(\Gamma)$
and which satisfies the inequality $h_2(1)>0$ for each link component of $L(\Gamma)$.
If the link component is negative then the starting point of the curve on the $xy$-plane
becomes the point $(c,0)$ with a positive constant $c$. Therefore,
the inequality $h_2(1)>0$ ensures that $\ker\alpha$ has a half Lutz twist
in the neighborhood of that negative component.
\end{proof}

\vspace{3mm}

\noindent
{\it Proof of Proposition~\ref{prop2000}.\;\,}
The assertion follows from Lemma~\ref{lemma2010}.
\qed

\section{Compatible contact structures via the Thurston-Winkelnkemper's construction}

In the proof of Theorem~\ref{thm01}, we will compare two compatible contact structures;
one is constructed in the previous section and 
the other is obtained by applying the Thurston-Winkelnkemper's construction
to the fibration of the fibered multilink such that the contact form is ``standard''
on the tori for the splicings, which we will show in this section.

We prepare one lemma before showing the construction.

\begin{lemma}\label{lemmatw}
Let $L(\m)$ be a fibered PT Seifert multilink in a homology $3$-sphere $\Sigma$
with link components $m_1S_1,\cdots,m_nS_n$ with $n\geq 2$ and $m_i\ne 0$,
and let $c_1,\cdots,c_n$ be positive real numbers satisfying
the inequality $-c_1+\sum_{i=2}^n c_i>0$.
Then there exists a contact form $\alpha$ on $\Sigma$ satisfying the following properties:
\begin{itemize}
\item[(1)] $L(\m)$ is compatible with the contact structure $\ker\alpha$.
\item[(2)] On a neighborhood of $\bd (D^2\times S^1)_1$, $\alpha$ has the form
\[
   \alpha=RU_1d\mu_1+\left(-c_1r_1+RV_1\right)d\lambda_1
\]
with $U_1>0$.
\item[(3)] On a neighborhood of $\bd (D^2\times S^1)_i$ for $i=2,\cdots,n$, $\alpha$ has the form
\[
   \alpha=RU_id\mu_i+\left(\frac{c_i}{r_i}+RV_i\right)d\lambda_i
\]
with $U_i>0$.
\end{itemize}
Here $R$ is a sufficiently large real number, $(r_i,\mu_i,\lambda_i)$ are coordinates
of $(D^2\times S^1)_i$ chosen such that $(r_i,\mu_i)$ are the polar coordinates 
of $D^2$ of radius $1$ and the orientation of $\lambda_i$ 
agrees with that of $m_iS_i$, and $(U_i,V_i)$ is a vector positively normal
to the fiber surface on $\bd (D^2\times S^1)_i$ with coordinates $(\mu_i,\lambda_i)$.
\end{lemma}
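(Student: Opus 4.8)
The plan is to build $\alpha$ by the Thurston--Winkelnkemper construction~\cite{tw} applied to the fibration $\pi\colon\Sigma\setminus L(\m)\to S^1$, choosing the primitive on the pages so that the boundary shapes in~(2) and~(3) appear automatically. Write $\theta$ for the fibration coordinate and $F$ for a page; recall that each page is transverse to the fibers of the Seifert fibration. On a collar of each boundary torus the closed form $d\theta$ is represented by a constant-coefficient form, which we write as $U_i\,d\mu_i+V_i\,d\lambda_i$; that $(U_i,V_i)$ is positively normal to the fiber surface means precisely that this covector annihilates the page-tangent direction and is positive on the positive page-normal. First I would produce a $1$-form $\eta$ on $\mathcal S\times S^1=\Sigma\setminus\bigcup_i\text{int}(D^2\times S^1)_i$ whose restriction to the interior of each page is a positive area form $d\eta>0$ and which on these collars equals $-c_1r_1\,d\lambda_1$ near $\bd(D^2\times S^1)_1$ and $\frac{c_i}{r_i}\,d\lambda_i$ near $\bd(D^2\times S^1)_i$ for $i\geq 2$. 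Setting $\alpha=\eta+R\,d\theta$ and adding the two contributions on each collar then gives exactly $RU_1\,d\mu_1+(-c_1r_1+RV_1)\,d\lambda_1$ and $RU_i\,d\mu_i+(\frac{c_i}{r_i}+RV_i)\,d\lambda_i$, as required.

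The construction of $\eta$ is the heart of the matter, and it is precisely here that the hypothesis $-c_1+\sum_{i=2}^n c_i>0$ enters. The two boundary models are convenient radial profiles with the same value at $r_i=1$; a direct check shows that $d\eta$ is a positive area form on the pages in each collar, the signs $-c_1$ and $+c_i$ being dictated by the orientations of the $|m_i|$ leaves together with $U_i>0$ and $c_i>0$. The global obstruction is the Stokes identity $\int_F d\eta=\int_{\bd F}\eta$: the left side must be positive, while the right side equals, up to the winding multiplicities, $-c_1+\sum_{i=2}^n c_i$, the component $S_1$ entering with opposite boundary orientation to the others. Granting this positivity, the interpolation argument used for the base form in Lemma~\ref{lemma2012} and in~\cite{ishikawa2} extends the boundary profiles to a global $\eta$ with $d\eta>0$; this replaces the sum condition ``$=0$'' of Lemma~\ref{lemma2012} by the present strict inequality.

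Next I would verify that $\alpha=\eta+R\,d\theta$ is a positive contact form giving property~(1). Since $d\theta$ is closed, $\alpha\land d\alpha=\eta\land d\eta+R\,d\theta\land d\eta$; as $d\eta$ is a positive area form on the pages and $d\theta$ is positive on the positive page-normal, the term $R\,d\theta\land d\eta$ is a positive volume form, so $\alpha\land d\alpha>0$ for $R$ sufficiently large. For such $R$ the Reeb field of $\alpha$ is, to leading order in $1/R$, a positive multiple of the page-normal, hence positively transverse to the interiors of the pages, so $\ker\alpha$ is compatible with $L(\m)$. The inequality $U_i>0$ follows from the orientation conventions: the $|m_i|$ leaves of a page meet $\bd(D^2\times S^1)_i$ in curves winding in the $\lambda_i$-direction, so $\theta$ increases with $\mu_i$, and because $\lambda_i$ is oriented to agree with $m_iS_i$ the $d\mu_i$-coefficient of $d\theta$ is positive.

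Finally I would close the construction off by extending $\alpha$ across each solid torus $(D^2\times S^1)_i$ exactly as in the proof of Lemma~\ref{lemma2011}: describe a curve $(-h_{i,1}(r_i),h_{i,2}(r_i))$ on the $xy$-plane that takes the prescribed boundary value at $r_i=1$, rotates monotonously, and ends at $(\mp c,\pm r_i^2)$ near $r_i=0$ according to the sign of $m_i$, so that $L(\m)$ stays positively transverse to $\ker\alpha$ and compatibility is preserved; any solid tori corresponding to regular fibers are filled as empty, multiplicity-zero components in the same manner. The main obstacle, as indicated, is the relative existence of $\eta$ realizing the two radial models with total residue $-c_1+\sum_{i=2}^n c_i>0$; once this is granted, the contact and boundary-form verifications are routine adaptations of the arguments in Lemma~\ref{lemma2012}, Lemma~\ref{lemma2011}, and~\cite{ishikawa2}.
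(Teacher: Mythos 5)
Your proposal is correct and follows essentially the same route as the paper's proof: the Thurston--Winkelnkemper construction with a primitive on the page (your $\eta$, the paper's $\beta$) realizing the prescribed boundary profiles $-c_1r_1\,d\lambda_1$ and $\frac{c_i}{r_i}\,d\lambda_i$, the hypothesis $-c_1+\sum_{i=2}^n c_i>0$ entering precisely as the Stokes/area constraint $\int_{\hat F_0}\Omega=-c_1+\sum_{i=2}^n c_i>0$, the form $\alpha=\eta+R\,d\theta$ with $R$ sufficiently large, $U_i>0$ from the orientation of $\lambda_i$, and the extension into the solid tori by the planar-curve argument of Lemma~\ref{lemma2011}. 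No substantive differences.
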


\begin{proof}
We denote by $F_t$ the fiber surface of $L(\m)$ over $t\in S^1=[0,1]/0\sim 1$ and
choose a diffeomorphism $\phi_t:F_0\to F_t$ of the fibration of $L(\m)$
in such a way that
\[
\phi_t(r_i,\mu_i,\lambda_i)=\left(r_i, \mu_i+\frac{t}{|m_i|} , \lambda_i \right)
\]
on each $(D^2\times S^1)_i$.
Let $\theta_i$ be the coordinate function on the curve $(-F_0)\cap \bd (D^2\times S^1)_i$
given as $\theta_i=-\lambda_i$. Set $\hat F_0=F_0\cap (\mathcal S\times S^1)$ and
let $\Omega$ be a volume form on $\hat F_0$ which satisfies
\begin{itemize}
\item $\int_{\hat F_0}\Omega=-c_1+\sum_{i=2}^n c_i>0$,
\item $\Omega=c_1dr_1\land d\theta_1$ near $\hat F_0\cap \bd (D^2\times S^1)_1$, and
\item $\Omega=\frac{c_i}{r_i^2}dr_i\land d\theta_i$ near $\hat F_0\cap \bd (D^2\times S^1)_i$
for $i=2,\cdots,n$.
\end{itemize}
Then, as in~\cite{tw}, we can find a $1$-form $\beta$ on $\hat F_0$ such that 
\begin{itemize}
\item $d\beta$ is a volume form on $\hat F_0$,
\item $\beta=c_1r_1d\theta_1=-c_1r_1d\lambda_1$ near $\hat F_0\cap \bd (D^2\times S^1)_1$, and
\item $\beta=-\frac{c_i}{r_i}d\theta_i=\frac{c_i}{r_i}d\lambda_i$ near $\hat F_0\cap \bd (D^2\times S^1)_i$ for $i=2,\cdots,n$.
\end{itemize}
The manifold $M$ is constructed from 
$\hat F_0\times [0,1]$ by identifying $(x,1)\sim (\phi_1(x),0)$ for each $x\in \hat F_0$
and then filling the boundary components by the solid tori $(D^2\times S^1)_i$'s.
Using this construction, we define a $1$-form $\alpha_0$ on $\mathcal S\times S^1$ as
\[
   \alpha_0=(1-t)\beta+t\phi_1^*(\beta)+Rdt,
\]
where $R>0$.
Near the boundary component $\bd (D^2\times S^1)_i$, this $1$-form is given as
\[
   \alpha_0=\beta+Rdt=
\begin{cases}
-c_1r_1d\lambda_1+R(U_1d\mu_1+V_1d\lambda_1) & \\
\frac{c_i}{r_i}d\lambda_i+R(U_id\mu_i+V_id\lambda_i) & \text{for\;\,} i=2,\cdots,n. \\
\end{cases}
\]
We choose $R$ sufficiently large such that $\alpha_0$ becomes 
a positive contact form on $\mathcal S\times S^1$.
Since $\lambda_i$ is oriented in the same direction as the oriented link component
$m_iS_i$, we always have $U_i>0$ for $i=1,\cdots,n$, see Figure~\ref{fig11}.
Hence the conditions~(2) and (3) are satisfied. 
\begin{figure}[htbp]
   \centerline{\input{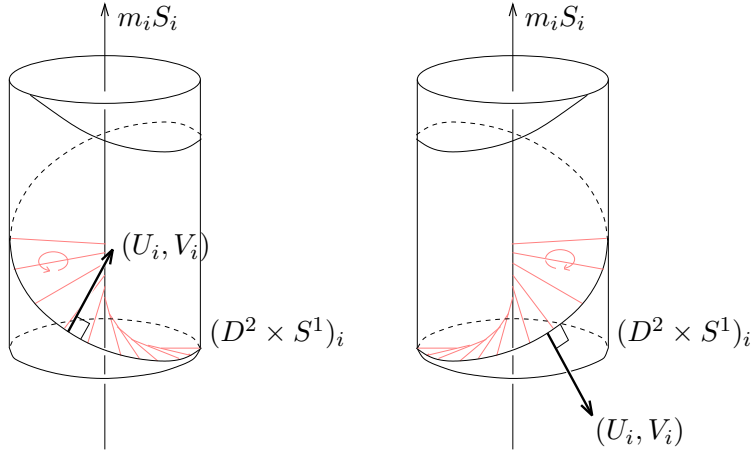}}
   \caption{The positive normal vector of the fiber surface along $\bd (D^2\times S^1)_i$.\label{fig11}}
\end{figure}

Finally we extend the contact form $\alpha_0$ into each $(D^2\times S^1)_i$.
The contact form $\alpha_0$ at $r_i=1$ corresponds to the point
$(-RV_1+c_1, RU_1)$ and $(-RV_i-c_i, RU_i)$ for $i=2,\cdots,n$
and its Reeb vector field corresponds to the vector $(c_i,0)$, for $i=1,\cdots,n$,
on the $xy$-plane used to represent the contact form on $(D^2\times S^1)_i$.
Since $U_i>0$, $c_i>0$ and $R$ is sufficiently large, we can describe 
a curve on the $xy$-plane as we did in the end of the proof of Lemma~\ref{lemma2011},
which gives a contact form $\alpha$ on $\Sigma$. On $\mathcal S\times S^1$,
since the Reeb vector field $R_\alpha$ of $\alpha$ is given as $\frac{1}{R}\frac{\bd}{\bd t}$,
it is positively transverse to the fiber surfaces of $L(\m)$.
This property is also satisfied on each $(D^2\times S^1)_i$ since
$R_\alpha$ and the fiber surface of $L(\m)$ are set as shown in Figure~\ref{fig1d}.
Thus $\ker\alpha$ is compatible with $L(\m)$ on the whole $\Sigma$.
\end{proof}

\begin{prop}\label{lemma031}
Let $(\Sigma, L(\Gamma))$ be a fibered PT graph multilink in a homology $3$-sphere $\Sigma$.
There exists a contact form $\alpha$ with the following properties:
\begin{itemize}
\item[(1)] $L(\Gamma)$ is compatible with the contact structure $\ker\alpha$.
\item[(2)] Let
\[
     (\Sigma,L(\Gamma))=\left[(\Sigma_1,L(\Gamma_1)) \textstyle\frac{\;\;\;\;\;\;}{S_1\;\;\;\;\;S_2} (\Sigma_2, L(\Gamma_2))\right]
\]
be a splice of $(\Sigma, L(\Gamma))$.
On the neighborhood $N(S_1)$ of $S_1$ for the splicing,
$\alpha$ has the form $\alpha=h_2(r)d\mu+h_1(r)d\lambda$, where
$(r,\mu,\lambda)$ are coordinates of $N(S_1)=D^2\times S^1$ chosen such that
$(r,\mu)$ are the polar coordinates of $D^2$ of radius $1$ and
the orientation of $\lambda$ agrees with that of $m_1S_1$
if $m_1\ne 0$ and is positively transverse to the meridional disk if $m_1=0$, and
$h_1$ and $h_2$ are real-valued smooth functions with parameter $r\in [0,1]$ such that
the argument of $-h_1(r)+\sqrt{-1}h_2(r)$ varies in $(0,\pi]$.
\end{itemize}
\end{prop}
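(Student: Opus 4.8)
The plan is to build $\alpha$ one Seifert piece at a time, putting the Thurston--Winkelnkemper contact form of Lemma~\ref{lemmatw} on each piece and gluing the forms across the splicing tori by the curve-joining argument of Lemma~\ref{lemma2010}. The reason to use Lemma~\ref{lemmatw} rather than Lemma~\ref{lemma2011} is that the former makes no hypothesis on the transversality of the Seifert fibers to the fiber surface, so it applies to every Seifert piece of an arbitrary fibered PT graph multilink. Concretely, I would write $(\Sigma,L(\Gamma))$ as an iterated splice of its Seifert pieces along the tori $T_1,\dots,T_\tau$ coming from the inner edges of $\Gamma$ and induct on $\tau$: the case $\tau=0$ is a single Seifert piece, where Lemma~\ref{lemmatw} (or Lemma~\ref{lemma2011}) gives a compatible $\alpha$ and condition~(2) is vacuous.

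For the inductive step I would peel off one outermost splicing torus $T$, write
\[
(\Sigma,L(\Gamma))=\left[(\Sigma_{\ell-1},L(\Gamma_{\ell-1}))\,\textstyle\frac{\;\;\;\;\;\;}{S\;\;\;\;\;S_{\ell,1}}\,(\Sigma', L_\ell(\m_\ell))\right],
\]
feed the induction hypothesis to $(\Sigma_{\ell-1},L(\Gamma_{\ell-1}))$ and Lemma~\ref{lemmatw} to the Seifert piece $(\Sigma', L_\ell(\m_\ell))$, with $(D^2\times S^1)_1$ of the lemma chosen to be the neighborhood of the splicing core $S_{\ell,1}$. Near $T$ the two forms are $h_2(r)d\mu+h_1(r)d\lambda$ and, after the meridian--longitude swap $(r_\ell,\mu_\ell,\lambda_\ell)=(2-r,\lambda,\mu)$, a form $h_{\ell,1}(2-r)d\mu+h_{\ell,2}(2-r)d\lambda$; I would plot the two corresponding points on the $xy$-plane of Section~2 and join them by a curve whose speed vector rotates monotonically, holding the fiber-surface slope constant on the collar $N(T)$, so that $\ker\alpha$ stays compatible with $L(\Gamma)$ there (cf.\ Figure~\ref{fig1b}). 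Because this modification is supported in $N(T)$, it leaves $\alpha$ unchanged near every other torus, and so, inductively, the standard form required by condition~(2) survives simultaneously at all of $T_1,\dots,T_\tau$.

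To check condition~(2) at $T=\bd N(S_1)$ itself, I would read $\alpha$ in the coordinates of $N(S_1)$ in which $\lambda$ is oriented along $m_1S_1$ (or positively transverse to the meridional disk when $m_1=0$). In these native coordinates Lemma~\ref{lemmatw} produces a positive $d\mu$-coefficient $h_2(1)>0$ (its coefficients $RU_i$ are positive because $U_i>0$), so the boundary point lies in the open upper half-plane with argument in $(0,\pi)$; and since $\lambda$ runs along the oriented component, the core is the positive binding model $(-h_1,h_2)=(-c,r^2)$ near $r=0$ (the same model serves the empty case $m_1=0$, cf.\ the proof of Lemma~\ref{lemma2011}), whose argument is $\pi$. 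The monotonically rotating curve joining these two points, which is the one that describes $\alpha$ on $N(S_1)$, then keeps the argument of $-h_1(r)+\sqrt{-1}h_2(r)$ inside $(0,\pi]$ for every $r\in[0,1]$, which is exactly condition~(2).

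The delicate part is the gluing in the second step: one must arrange the construction so that, across every splicing torus, the two boundary points can be connected by a single monotonically rotating curve without the curve sweeping onto the positive $x$-axis (which would create an unwanted half Lutz twist) and without spoiling the standard shape at the tori already treated. This is done, as in Lemma~\ref{lemma2010}, by choosing consistently which boundary of each Seifert piece plays the distinguished ``$-c_1$'' role of Lemma~\ref{lemmatw} --- possible because $\Gamma$ is a tree, e.g.\ by rooting $\Gamma$ and orienting its inner edges toward the root, so that each splicing torus is distinguished on exactly one of its two sides --- and by rescaling the forms by positive constants so that the two boundary points sit in a common half-plane. The positive constants $c_i$ on the two sides of a torus need not agree, since the gluing only connects two points, so the inequality $-c_1+\sum_{i\ge2}c_i>0$ demanded by Lemma~\ref{lemmatw} can be met on each piece separately. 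Once these choices are fixed, the positivity $h_2(1)>0$ built into Lemma~\ref{lemmatw} makes the argument condition of~(2) automatic.
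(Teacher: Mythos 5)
Your proposal follows the paper's own route: induct over the splice decomposition, put the Thurston--Winkelnkemper form of Lemma~\ref{lemmatw} on each Seifert piece with the distinguished boundary $(D^2\times S^1)_1$ taken to be the one facing the already-built part (your ``rooted tree'' choice), glue across each splicing torus by a monotonically rotating curve on the $xy$-plane, and deduce condition~(2) from $U_i>0$. The one place where your mechanism as stated does not do the job is the claim that the two boundary points are brought into position ``by rescaling the forms by positive constants'': a positive rescaling moves a point radially and cannot change the angular sector in which it lies, whereas what must be arranged is angular --- the positive normal to $\ker\alpha_{\ell-1}$ has to lie between the positive normal to $\ker\hat\alpha_\ell$ and the positive normal to the fiber surface, so that the monotone connecting curve crosses the fiber-surface slope in the right sense and never reaches the positive $x$-axis. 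The paper achieves this by taking the Thurston--Winkelnkemper constant $R$ of the already-built side sufficiently large relative to $R_\ell$ (squeezing its normal against the fiber-surface normal) and only then rescaling $\hat\alpha_\ell$; it also runs a case analysis on the sign of $V_{\ell,1}$ (plus the degenerate cases $m=0$ and $m_{\ell,1}=0$) to decide whether the gluing map is $(2-r_{\ell,1},\lambda_{\ell,1},\mu_{\ell,1})$ or $(2-r_{\ell,1},-\lambda_{\ell,1},-\mu_{\ell,1})$ and hence where the two points actually sit. These gaps are fillable, but they are exactly where the content of the paper's proof lives, so they should not be waved through.
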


\begin{proof}
Let $L_1(\m_1),\cdots,L_{\ell'}(\m_{\ell'})$ denote the fibered Seifert multilinks before the
splicing of $L(\Gamma)$.
Suppose that $L(\Gamma)$ is obtained from $L_1(\m_1)$ by splicing 
$L_2(\m_2),\cdots,L_{\ell'}(\m_{\ell'})$ successively and, for $i=2,\cdots,\ell'$,
let $S_{i,1}$ denote the link component of $L_i(\m_i)$ for this inductive splicings.

We prove the assertion by induction.
The assertion holds for $L_1(\m_1)$ as proved in Lemma~\ref{lemmatw}.
Suppose that the assertion holds for a fibered PT graph multilink $(\Sigma_{\ell-1},L(\Gamma_{\ell-1}))$ 
obtained from $L_1(\m_1)$ by splicing $L_2(\m_2),\cdots,L_{\ell-1}(\m_{\ell-1})$ successively, 
where $2\leq \ell\leq \ell'$,
and consider the next splice
\[
     (\Sigma_\ell,L(\Gamma_\ell))=\left[(\Sigma_{\ell-1},L(\Gamma_{\ell-1})) \textstyle\frac{\;\;\;\;\;\;}{S\;\;\;\;\;S_{\ell,1}} (\Sigma', L_\ell(\m_\ell))\right].
\]
Let $\alpha_{\ell-1}$ denote the contact form on $\Sigma_{\ell-1}$ satisfying the required
properties in the assertion, and
$m$ and $m_{\ell,1}$ denote the multiplicities of $S$ and $S_{\ell,1}$ respectively.

We first deal with the case where $m\ne 0$ and $m_{\ell,1}\ne 0$.
Let $\hat\alpha_{\ell}$ be the contact form on $\Sigma'$
whose kernel is compatible with $L_{\ell}(\m_{\ell})$,
obtained by applying Lemma~\ref{lemmatw} such that the specified component $S_1$ in the lemma
corresponds to the link component $S_{\ell,1}$ of $L_{\ell}(\m_{\ell})$.
Set the neighborhood $N(S_{\ell,1})$ of $S_{\ell,1}$ for the splicing to be 
$(D^2\times S^1)_1$ in the lemma and
fix coordinates $(r_{\ell,1}, \mu_{\ell,1}, \lambda_{\ell,1})$ of
$N(S_{\ell,1})=D^2\times S^1$ such that $(r_{\ell,1}, \mu_{\ell,1})$ are the polar
coordinates of $D^2$ of radius $1$ and the orientation of $\lambda_{\ell,1}$
agrees with that of $m_{\ell,1}S_{\ell,1}$.
On $N(S_{\ell,1})$, the contact form $\hat\alpha_{\ell}$ is given as
\[
   \hat\alpha_\ell
=R_\ell U_{\ell,1}d\mu_{\ell,1}+\left(-c_{\ell,1}r_{\ell,1}+R_\ell V_{\ell,1}\right)d\lambda_{\ell,1}
\]
with $U_{\ell,1}>0$,
where $(U_{\ell,1},V_{\ell,1})$ is a vector positively normal to the fiber surface of 
$L_\ell(\m_\ell)$ on $\bd N(S_{\ell,1})$ with coordinates $(\mu_{\ell,1},\lambda_{\ell,1})$
and $c_{\ell,1}$ is some positive constant.

Similarly, set the neighborhood $N(S)$ of $S$ for the splicing to be the one
specified in Lemma~\ref{lemmatw} in the inductive construction of $L(\Gamma_{\ell-1})$
and fix coordinates $(r,\mu,\lambda)$ of $N(S)=D^2\times S^1$ such that
$(r,\mu)$ are the polar coordinates of $D^2$ of radius $1$ and the orientation of $\lambda$
agrees with that of $mS$.
On a neighborhood of $\bd N(S)$, the contact form $\alpha_{\ell-1}$ is given as
\[
   \alpha_{\ell-1}=RUd\mu+\left(\frac{c}{r}+RV\right)d\lambda
\]
with $U>0$,
where $(U,V)$ is a vector positively normal to the fiber surface of $L(\Gamma_{\ell})$
on $\bd N(S)$ with coordinates $(\mu,\lambda)$ and $c$ is some positive constant.
The gluing map of the splice can be written as 
$(r, \mu, \lambda)=(2-r_{\ell,1},\pm\lambda_{\ell,1},\pm\mu_{\ell,1})$, where
the sign $\pm$ depends on if $\lambda_{\ell,1}$ and 
$\lambda$ are consistent with the working orientations or not, which will be clarified later.
Let $T$ denote the torus $\bd N(S_{\ell,1})=\bd N(S)$ for the splicing and
$N(T)$ denote its small neighborhood.

Consider the case $V_{\ell,1}>0$. 
We assume that the orientation of $\lambda$ agrees with the working orientation of $S_{\ell,1}$.
The argument below works in the case where they are opposite, so we omit the proof in that case.
In the case under consideration,
since these orientations coincide, we have the left figure in Figure~\ref{fig2}.
By observing the identification of the splicing, 
the orientation of the fiber surface is fixed as shown on the right,
which implies that the orientation of $\lambda$ agrees with the working orientation of $S$.
Hence the positive normal
vectors $(U_{\ell,1}, V_{\ell,1})$ and $(U,V)$ of the fiber surface
are identified on $T$ as $(U,V)=K(V_{\ell,1},U_{\ell,1})$ 
with some constant $K>0$, see Figure~\ref{fig2}.
This means that the sign $\pm$ above is $+$ and
the gluing map is given as $(r, \mu, \lambda)=(2-r_{\ell,1}, \lambda_{\ell,1}, \mu_{\ell,1})$.
Thus the contact form $\alpha_{\ell-1}$ is written as
\[
   \alpha_{\ell-1}=\left(\frac{c}{2-r_{\ell,1}}+RKU_{\ell,1}\right)d\mu_{\ell,1}
+RKV_{\ell,1}d\lambda_{\ell,1}.
\]

We now choose $R$ sufficiently large relative to $R_\ell$ such that
the positive normal vector of $\ker\alpha_{\ell-1}$ lies between
that of $\ker\hat\alpha_\ell$ and that of the fiber surface, see Figure~\ref{fig3}.
Note that the figure is
described with the coordinates $(r_{\ell,1}, \mu_{\ell,1}, \lambda_{\ell,1})$.
By multiplying a positive constant to $\hat\alpha_\ell$ if necessary,
we can describe a curve $\gamma(r)$ on the $xy$-plane which represents
a positive contact form $\alpha_{N(T)}$ on $N(T)$ connecting $\alpha_{\ell-1}$
and $\hat\alpha_\ell$ smoothly.  Moreover we can choose $\gamma(r)$ such that
$\gamma'(r)$ rotates monotonously.
Set the slope of the fiber surface of $L(\Gamma_\ell)$ to be constant in $N(T)$, then
the monotonous rotation of $\gamma'(r)$ ensures that $\alpha_{N(T)}$ is
compatible with $L(\Gamma_\ell)$. 
Figure~\ref{fig3} shows that the argument of $-h_1(r)+\sqrt{-1}h_2(r)$ varies in $(0,\pi]$.

\begin{figure}[htbp]
   \centerline{\input{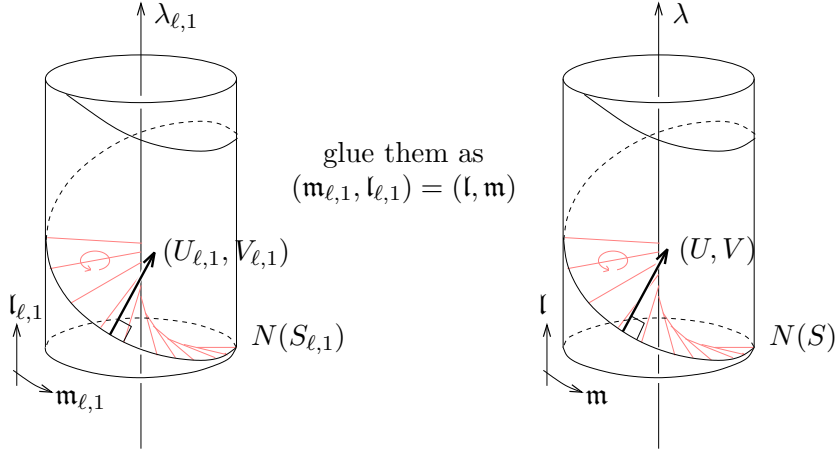}}
   \caption{The positive normal vector of the fiber surface on $T$ in case $V_{\ell,1}>0$.
Here $(\fM_{\ell,1},\fL_{\ell,1})$ and $(\fM, \fL)$ are the preferred meridian-longitude
pairs of $\Sigma'\setminus\text{int\,} N(S_{\ell,1})$ and
 $\Sigma_{\ell-1}\setminus\text{int\,}N(S)$ for the splicing respectively.\label{fig2}}
\end{figure}

\begin{figure}[htbp]
   \centerline{\input{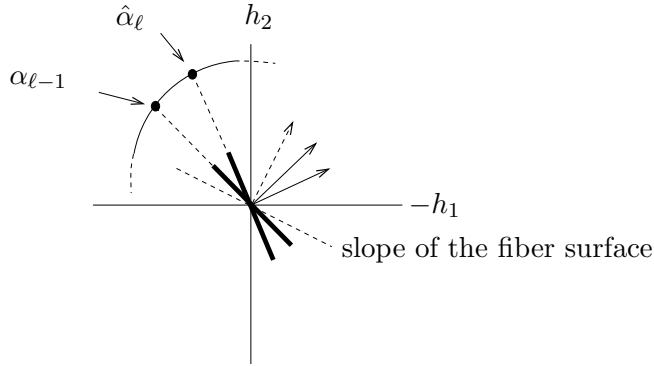}}
   \caption{Glue $\alpha_{\ell-1}$ and $\hat\alpha_\ell$ on $N(T)$ in case $V_{\ell,1}>0$.\label{fig3}}
\end{figure}

We next consider the case $V_{\ell,1}<0$.
Assume again that the orientation of $\lambda_{\ell,1}$ agrees with
the working orientation of $S_{\ell,1}$, and we omit the proof in the other case.
In this case,
the mutual positions of the fiber surface, $\lambda_{\ell,1}$ and $\lambda$ become
as shown in Figure~\ref{fig4}.
In particular, the orientation of $\lambda$ is opposite to the working orientation of $S$.
Let $\overset{\to}{n}$ be the vector positive normal to $\ker\alpha_{\ell-1}$, which is
given as $(U,V)$ with the coordinates $(\mu,\lambda)$.
This means that $\overset{\to}{n}$ is given as $(-U,-V)$ with
the coordinates corresponding to the preferred meridian-longitude pair $(\fM,\fL)$
of $\Sigma_{\ell-1}\setminus\text{int\,}N(S)$ for the splicing. Hence,
the positive normal vectors $(U_{\ell,1}, V_{\ell,1})$ and $(U,V)$ of the fiber surface
are identified on $T$ as $(U,V)=-K(V_{\ell,1},U_{\ell,1})$ with some constant $K>0$.
Since the gluing map of the splicing is given as
$(r,\mu,\lambda)=(2-r_{\ell,1},-\lambda_{\ell,1},-\mu_{\ell,1})$,
we have
\[
   \alpha_{\ell-1}=\left(-\frac{c}{2-r_{\ell,1}}+RKU_{\ell,1}\right)d\mu_{\ell,1}
+RKV_{\ell,1}d\lambda_{\ell,1}.
\]
Plot $\alpha_{\ell,1}$ above and $\hat\alpha_\ell$ on the $xy$-plane 
and obtain a contact form connecting $\alpha_{\ell-1}$ and $\hat\alpha_\ell$ on $N(T)$ smoothly
such that it satisfies the required conditions, see Figure~\ref{fig6}.
Note that the figure is described with
the coordinates $(r_{\ell,1}, \mu_{\ell,1}, \lambda_{\ell,1})$.
The figure shows that the argument of $-h_1(r)+\sqrt{-1}h_2(r)$ varies in $(0,\pi]$.
This completes the proof in case $V_{\ell,1}<0$.
%\begin{figure}[htbp]
\begin{figure}[t]
   \centerline{\input{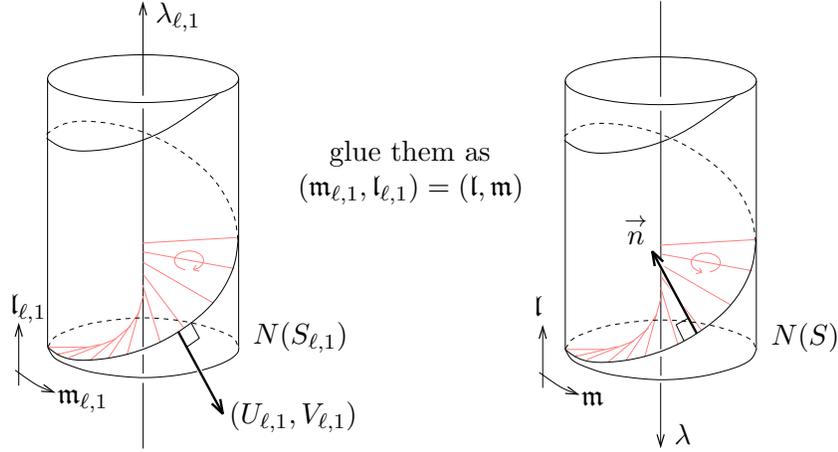}}
   \caption{The positive normal vector of the fiber surface on $T$ in case $V_{\ell,1}<0$.\label{fig4}}
\end{figure}

%\begin{figure}[htbp]
\begin{figure}[t]
   \centerline{\input{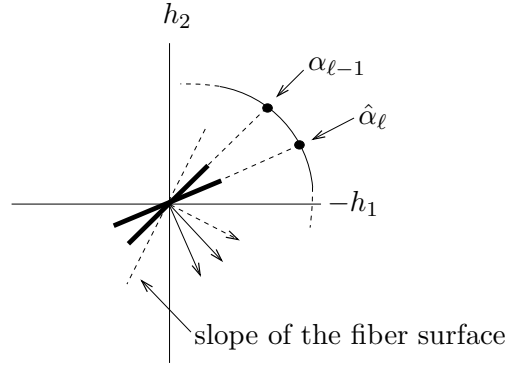}}
   \caption{Glue $\alpha_{\ell-1}$ and $\hat\alpha_\ell$ on $N(T)$ in case $V_{\ell,1}<0$.\label{fig6}}
\end{figure}

To finish the proof of the proposition, we need to prove
the assertion in case $m_{\ell,1}=0$ and case $m=0$.
We prove the former case here and omit the latter since the proof is similar.
We further assume that the orientation of $\lambda_{\ell,1}$ agrees with
the working orientation of $S_{\ell,1}$ and omit the proof of the other case.
In the case under consideration,
we can set $(U_{\ell,1}, V_{\ell,1})=(0,1)$ and $(U, V)=(1,0)$.
In particular, we have $m>0$.
Let $\hat\alpha_\ell$ be a contact form on $\Sigma'$ obtained by applying Lemma~\ref{lemmatw}.
The empty link component $S_{\ell,1}$ has a tubular neighborhood $N(S_{\ell,1})$
with a contact form
\[
   \hat\alpha_\ell=c_{\ell,1}(r^2_{\ell,1}d\mu_{\ell,1}+d\lambda_{\ell,1}),
\]
where $c_{\ell,1}$ is a positive constant and the radius of $N(S_{\ell,1})$, say $\ve>0$,
is sufficiently small.
We choose the neighborhood $N(S)$ of $S$ for the splicing as before.
Now we perform the splice for the above $N(S_{\ell,1})$ and $N(S)$.
The gluing map is given as $(r,\mu,\lambda)=(1+\ve-r_{\ell,1}, \lambda_{\ell,1}, \mu_{\ell,1})$
and hence the contact form $\alpha_{\ell-1}$ is
\[
   \alpha_{\ell-1}=\frac{c}{1+\ve-r_{\ell,1}}d\mu_{\ell,1}+Rd\lambda_{\ell,1}.
\]
We choose $R$ sufficiently large such that the positive normal vector of $\ker\alpha_{\ell-1}$
lies between that of $\ker\hat\alpha_\ell$ and that of the fiber surface, see Figure~\ref{fig5}.
Then the two contact forms $\alpha_{\ell-1}$ and $\hat\alpha_\ell$ are connected smoothly
by describing a curve on the $xy$-plane as before.
The figure shows that the argument of $-h_1(r)+\sqrt{-1}h_2(r)$ varies in $(0,\pi]$.
\end{proof}
\begin{figure}[htbp]
   \centerline{\input{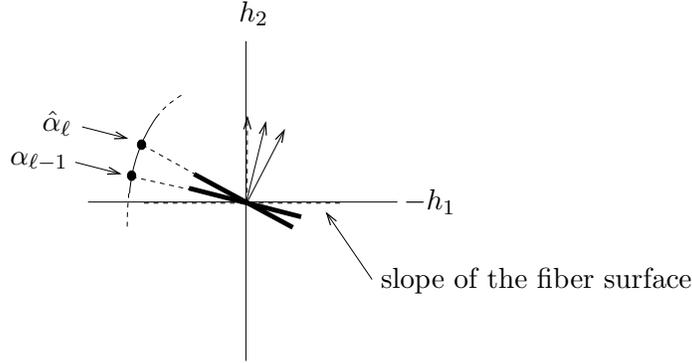}}
   \caption{Glue $\alpha_{\ell-1}$ and $\hat\alpha_\ell$ on $N(T)$ in case $m_{\ell,1}=0$.\label{fig5}}
\end{figure}

\section{Proofs of Theorem~\ref{thm01} and Corollary~\ref{cor03}}

Theorem~\ref{thm01} is included in the next assertion.

\begin{thm}\label{thm013}
Let $L(\Gamma)$ be a fibered PT graph multilink
in $S^3$. Then the following three statements are equivalent:
\begin{itemize}
\item[(1)] The compatible contact structure of $L(\Gamma)$ is tight.
\item[(2)] The compatible contact structure of each Seifert multilink before
the splicing of $L(\Gamma)$ is tight.
\item[(3)] Suppose that the denominators of the Seifert invariants
of each Seifert fibered homology $3$-sphere before the splicing of $L(\Gamma)$
are all positive. Note that we can always choose such Seifert invariants by
\cite[Proposition~7.3]{en}. In this setting, the multiplicities assigned to
the link components of $L(\Gamma)$ are either all positive or all negative.
\end{itemize}
\end{thm}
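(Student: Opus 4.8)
The plan is to prove the theorem as the chain $(3)\Rightarrow(1)$, $(1)\Rightarrow(3)$ and $(2)\Leftrightarrow(3)$, so that all three statements become equivalent. Two facts recalled in the preliminaries are used throughout. First, a fibered multilink in $S^3$ determines its compatible contact structure up to contactomorphism, so tightness is a well-defined invariant of $L(\Gamma)$ that I may test on whichever of the two models of Sections~3 and~4 is convenient. Second, I use the determination of tightness for a single fibered PT Seifert multilink from~\cite{ishikawa2}: in a Seifert fibered homology sphere whose product of denominators is positive, the compatible contact structure is tight precisely when the multilink is canonically oriented, i.e.\ all of its components are positive or all negative with respect to the Seifert fibration. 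Since all denominators are normalized to be positive, the sign of a component relative to the Seifert fibration is simply the sign of its multiplicity. I will also use once, in both directions, the symmetry $\m\mapsto-\m$: reversing the orientation of every component flips the orientation of the fiber surface, interchanges the symbols $\oplus$ and $\ominus$ in $\hat\Gamma$, and preserves tightness versus overtwistedness (a negative component in an $\oplus$ piece and a positive component in a $\ominus$ piece behave identically in the curve picture of Section~2, and likewise for the aligned cases).

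First I would settle $(2)\Leftrightarrow(3)$ combinatorially. Applying the criterion of~\cite{ishikawa2} to every Seifert piece, statement~$(2)$ is equivalent to saying that each pre-splice Seifert multilink has multiplicities of a single sign. The essential input is a propagation statement: because each Seifert piece is PT, the linking numbers along the splice diagram are positive, and the multiplicity carried by any inner splice curve is a positive integral combination of the multiplicities of the arrowheads lying beyond it. Hence, across any splice torus the shared inner multiplicity has the common sign of the piece on either side, and by connectivity of $\Gamma$ the sign of every multiplicity, inner or arrowhead, is forced to be globally constant as soon as each piece is canonical; this gives $(2)\Rightarrow(3)$. Conversely, if all arrowhead multiplicities share a sign then every inner multiplicity has that same sign, so every piece is canonical, giving $(3)\Rightarrow(2)$. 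I expect this to be a bookkeeping induction over the splice tree, the positivity of the product of denominators in each piece doing the work of keeping all coefficients positive.

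For $(3)\Rightarrow(1)$, assume the orientation is canonical; by the symmetry above I may assume all multiplicities are positive. The propagation statement then makes all inner multiplicities positive, so $\hat\Gamma$ has only $\oplus$ vertices and only $+$ signs, and the construction of Lemma~\ref{lemma2010}, equivalently the controlled Thurston--Winkelnkemper form of Proposition~\ref{lemma031} whose normal vector on every splice torus has argument in $(0,\pi]$, produces a compatible contact form with no half Lutz twist. To upgrade the absence of a visible overtwisted disk to genuine tightness I would exhibit this structure as fillable: the monodromy of a canonically oriented PT graph multilink is a product of right-handed Dehn twists, so the supported contact structure is Stein fillable and hence tight, exactly as in the Seifert case of~\cite{ishikawa2}. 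This is the one place where a filling must be invoked, and I regard establishing it as the principal analytic obstacle of this direction.

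Finally, for $(1)\Rightarrow(3)$ I argue the contrapositive, and the crux is to arrange the overtwistedness at an \emph{arrowhead}: a half Lutz twist located around an inner splice curve lies in a solid torus that is removed during the splicing and so need not survive in $S^3$. The key geometric lemma is a coherence statement, namely that for a fibered PT graph multilink all Seifert pieces carry the same sign, $\oplus$ or $\ominus$, in $\hat\Gamma$; I would check this by a direct computation on each splice torus using~\eqref{eqab}, comparing the signed intersections of the two Seifert-fiber directions with the fiber surface. Granting coherence, suppose the orientation is not canonical, so that there exist both a positive and a negative arrowhead; after the symmetry $\m\mapsto-\m$ I may take the common type to be $\oplus$. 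The negative arrowhead $m_iS_i$ then lies in an $\oplus$ piece, and the construction of Lemma~\ref{lemma2011} forces its defining curve to start at a point $(c,0)$ with $c>0$, producing a half Lutz twist in a tubular neighborhood $N(S_i)\subset S^3$ of the genuine link component $S_i$. By uniqueness of the compatible contact structure this shows $\ker\alpha$ is overtwisted, completing the contrapositive; it is exactly the coherence lemma, ruling out the configuration of a positive arrowhead in an $\oplus$ piece together with a negative arrowhead in a $\ominus$ piece, that lets me place the half Lutz twist at an arrowhead, and I regard it as the main obstacle here. Corollary~\ref{cor03} then follows at once: for a germ $f\bar g$ with $f$ and $g$ nonunits the components of $\{f=0\}$ and $\{g=0\}$ enter the multilink with opposite multiplicities, so its Milnor fibration is never canonically oriented, and Theorem~\ref{thm013} makes the compatible contact structure overtwisted.
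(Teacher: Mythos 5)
Your proposal has a genuine gap in each of the two hard implications. The more serious one is the ``coherence lemma'' on which your proof of $(1)\Rightarrow(3)$ rests: it is not true that all Seifert pieces of a fibered PT graph multilink carry the same sign in $\hat\Gamma$. The sign of a piece at a vertex $v$ is the sign of the algebraic intersection of its generic Seifert fiber with the fiber surface, which by~\cite[Corollary~10.6]{en} is a positive linear combination $\sum_i \ell_{vi}m_i$ of the arrowhead multiplicities, with weights $\ell_{vi}>0$ that vary from vertex to vertex; when the $m_i$ have mixed signs these combinations can have opposite signs at different vertices. The paper only proves the \emph{conditional} propagation of Claim~\ref{claim102} (the neighbouring vertex is $\oplus$ provided all edge signs on the current subgraph are already $+$), and when an edge sign is $-$ it does not propagate anything but invokes Claim~\ref{claim101} instead. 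Without coherence you cannot relocate the half Lutz twist to an arrowhead, and you must confront exactly the difficulty you set out to avoid: showing that an overtwisted disk sitting around an \emph{inner} splice curve survives the subsequent splicings. That survival argument is the real content of Claim~\ref{claim101}: one compares the contact form of Proposition~\ref{prop2000} with the Thurston--Winkelnkemper form of Proposition~\ref{lemma031}, whose normal argument stays in $(0,\pi]$ on every splice torus so that no Lutz twist hides there, and then runs a Giroux-type uniqueness argument relative to the splice neighbourhoods to push the overtwisted disk into the region where the two forms agree. Nothing in your proposal substitutes for this step. (Even granting coherence, applying Lemma~\ref{lemma2010} also requires all \emph{inner} multiplicities to be non-negative, which coherence of the $\oplus$ signs does not give you.)

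The second gap is in $(3)\Rightarrow(1)$: you reduce tightness to the claim that the monodromy of a canonically oriented fibered PT graph multilink is a product of right-handed Dehn twists and that this forces Stein fillability. For multilinks with multiplicities this is a \emph{rational} open book, for which the positive-monodromy-implies-fillable implication is not available off the shelf, and proving the positivity of the monodromy is essentially as hard as the theorem itself; neither point is ``exactly as in the Seifert case.'' The paper avoids fillability entirely by using the hypothesis that the ambient manifold is $S^3$: each splice is performed along a trivial knot, hence replaces one solid torus of $S^3$ by another, and after shrinking that solid torus and applying Gray's theorem the contact form of the spliced multilink is a deformation of the previous one. Inductively the compatible contact structure of $L(\Gamma)$ is contactomorphic to that of the initial Seifert multilink, which is tight by~\cite[Theorem~1.1]{ishikawa2}. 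This is also why the paper proves the chain $(3)\Rightarrow(2)\Rightarrow(1)\Rightarrow(3)$ and never needs your direct $(2)\Rightarrow(3)$, whose sign-propagation across splice tori can in any case break down at an inner component of multiplicity $0$.
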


\begin{proof}
The implication (3) $\Rightarrow$ (2) is proved as follows.
If all the multiplicities of $L(\Gamma)$ are negative then we change the orientation
of $L(\Gamma)$ by the involution in~\cite[Proposition~8.1]{en} so that they are positive.
The remaining multiplicities on each Seifert multilink before the splicing are also positive
by the formula in~\cite[Corollary~10.6]{en}.
Hence the compatible contact structure of each Seifert multilink
is tight by~\cite[Theorem~1.1]{ishikawa2}.
Note that this proof works not only for PT graph multilinks in $S^3$
but also for those multilinks in any homology $3$-spheres.

We will use the condition that the manifold is $S^3$ essentially in the rest of the proof.
It is known in~\cite[Theorem~9.2]{en} that a graph link in $S^3$ is obtained from a trivial knot 
in $S^3$ by using cabling and summing operations
and the proof still works for graph multilinks in $S^3$.
Since we are considering only PT graph multilinks, we can think that 
a PT graph multilink $L(\Gamma)$ is obtained from a Seifert multilink in 
$S^3$ by iterating a splicing of a Seifert multilink in $S^3$ 
along a link component ambient isotopic to the trivial knot in $S^3$.

Let $L_1(\m_1)$ denote the initial Seifert multilink in $S^3$ and
suppose that $L(\Gamma)$ is obtained from $L_1(\m_1)$ by splicing
Seifert multilinks $L_2(\m_2),\cdots,L_{\ell'}(\m_{\ell'})$ successively.
Let $(\Sigma_{\ell-1}, L(\Gamma_{\ell-1}))$ denote the fibered PT graph multilink
obtained from $L_1(\m_1)$ by splicing
$L_2(\m_2),\cdots,L_{\ell-1}(\m_{\ell-1})$ successively, where $2\leq\ell\leq\ell'$.

We prove the implication (2) $\Rightarrow$ (1) by induction.
Without loss of generality, we assume by~\cite[Proposition~7.3]{en} that
the denominators of the Seifert invariants of each Seifert fibered homology
$3$-sphere before the splicing are all positive.
The assertion is obvious for $(\Sigma_1, L(\Gamma_1))$.
Assume that the assertion holds for $i=1,\cdots,\ell-1$ and consider the next splice
\begin{equation}\label{sp100}
    (\Sigma_{\ell}, L(\Gamma_{\ell}))=\left[(\Sigma_{\ell-1}, L(\Gamma_{\ell-1})) \textstyle\frac{\;\;\;\;\;\;}{S\;\;\;\;\;S_{\ell,1}}(\Sigma', L_\ell(\m_\ell)\right].
\end{equation}

We prepare a contact form $\hat\alpha_\ell$ whose kernel is compatible with
$L_\ell(\m_\ell)$ as follows.
The ambient space $S^3$ of $L_\ell(\m_\ell)$ is decomposed into two solid tori
$S^3\setminus \text{int\,}N(S_{\ell,1})$ and $N(S_{\ell,1})$,
where $N(S_{\ell,1})$ is the neighborhood of the link component $S_{\ell,1}$ of $L_\ell(\m_{\ell})$
for the splicing. Let $(\fM_1,\fL_1)$ and
$(\fM_2,\fL_2)$ denote the preferred meridian-longitude pairs of these
solid tori respectively. Note that they are glued in such a way that
$(\fM_1,\fL_1)=(\fL_2,\fM_2)$.
The curves in Figure~\ref{fig10a} represent a contact form $\hat\alpha_\ell$ on $S^3$
whose Reeb vector field is tangent to the fibers
of the Seifert fibration of $L_\ell(\m_\ell)$ except for small tubular neighborhoods
of its two singular fibers.
Note that the curve on the left figure represents the contact form 
on $S^3\setminus \text{int\,}N(S_{\ell,1})$ 
and the one on the right represents that on $N(S_{\ell,1})$.

\begin{figure}[htbp]
   \centerline{\input{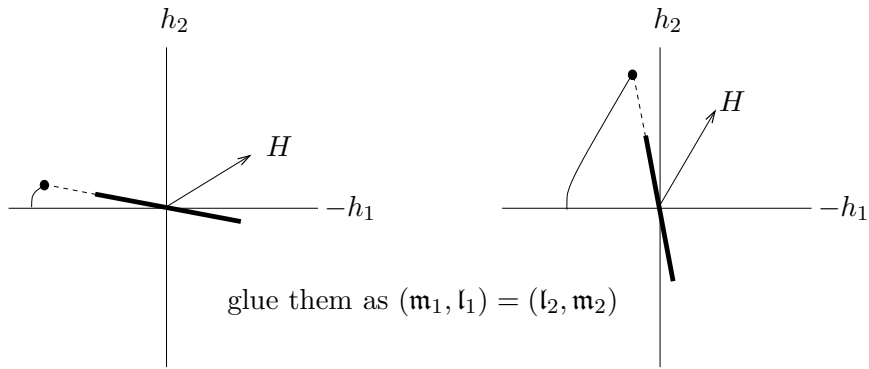}}
   \caption{A contact form on $S^3$ whose Reeb vector field is tangent to
the fibers of the Seifert fibration of $L(\m)$ except small neighborhoods of the two singular fibers.\label{fig10a}}
\end{figure}

On the other hand, by the assumption of the induction, there exists a contact form $\alpha_{\ell-1}$
whose kernel is compatible with $L(\Gamma_{\ell-1})$.
We choose a neighborhood $N(S)$ of the link component $S$
of $L(\Gamma_{\ell-1})$ for the splicing such that
$\alpha_{\ell-1}$ is given on $N(S)$ as $\alpha_{\ell-1}=c(r^2d\mu+d\lambda)$,
where $c$ is a positive constant, $(r,\mu,\lambda)$ are coordinates of 
$N(S)=D^2\times S^1$ chosen such that $(r,\mu)$ are the polar coordinates of $D^2$
with sufficiently small radius and the orientation of $\lambda$ is consistent with that of $mS$.
Note that this agrees with the working orientation of $S$ because $m>0$ by
~\cite[Theorem~1.1]{ishikawa2} and~\cite[Corollary~10.6]{en}.

The splice~\eqref{sp100} is equivalent to the replacement of
$N(S)$ in $\Sigma_{\ell-1}$ by $S^3\setminus \text{int\,}N(S_{\ell,1})$.
We now choose $S^3\setminus \text{int\,}N(S_{\ell,1})$ sufficiently small and
deform $\alpha_{\ell-1}$ such that $\hat\alpha_\ell=\alpha_{\ell-1}$
on $S^3\setminus \text{int\,}N(S_{\ell,1})=N(S)$,
which is done by applying the Gray's theorem~\cite{gray}.
We set the contact form $\alpha_\ell$ on $S^3$ to be this deformed $\alpha_{\ell-1}$.
Since the Reeb vector fields of $\alpha_{\ell-1}$ and $\hat\alpha_\ell$ are
both positively transverse to
the fiber surfaces of $L(\Gamma_\ell)$ on $\Sigma_{\ell-1}\setminus\text{int\,}N(S)$ and
$S^3\setminus\text{int\,}N(S_{\ell,1})$ respectively,
the Reeb vector field of $\alpha_\ell$ also satisfies the same property.
Hence $\ker\alpha_\ell$ is compatible with $L(\Gamma_\ell)$.

By induction, we obtain a contact form $\alpha_{\ell'}$ on $S^3$
whose kernel is compatible with $L(\Gamma)$ and contactomorphic to $\ker\alpha_1$.
Since $\ker\alpha_1$ is tight, $\ker\alpha_{\ell'}$ is also.
This completes the proof of the implication (2) $\Rightarrow$ (1).

Finally we prove the implication (1) $\Rightarrow$ (3). 
We can assume that the splice diagram is minimal since
the operations and their inverses to make equivalent splice diagrams do not change
the multiplicities at the arrowhead vertices of $\Gamma$.
If the minimal splice diagram is of type $\leftrightarrow$ then the assertion follows 
from~\cite[Theorem~1.1]{ishikawa2}.
So, we further assume that the minimal splice diagram is not of type $\leftrightarrow$.
In particular, in this setting, the fibers of the Seifert fibration intersect
the interiors of the fiber surfaces of $L(\Gamma)$ transversely
in each Seifert piece of $(S^3, L(\Gamma))$, so the diagram $\hat\Gamma$ is defined.

Let $v_\ell$ denote the inner vertex corresponding to $L_\ell(\m_\ell)$ 
and $\hat v_\ell$ the corresponding vertex in $\hat\Gamma$.
Since the graph multilink is invertible by~\cite[Theorem~8.1]{en},
we change the signs of all the multiplicities at the arrowhead vertices of $L(\Gamma)$
if necessary such that the fibers of the Seifert fibration
in $\Sigma_1$ intersect the interiors of the fiber surfaces of $L_1(\m_1)$ positively transversely.
By this assumption, we have $\hat v_1=\oplus$.
Let $\hat\Gamma_\ell$ denote the subgraph of $\hat\Gamma$ consisting of the inner vertices
$\hat v_1,\cdots,\hat v_\ell$ and the inner edges connecting them.

\begin{claim}\label{claim101}
Suppose that all vertices of $\hat\Gamma_\ell$ are $\oplus$ and all edges have sign $+$.
Suppose further that there exists an edge $e$ of $\hat\Gamma$ connected to,
but not included in, $\hat \Gamma_\ell$
with sign $-$ at the root connected to $\hat\Gamma_\ell$.
Then the compatible contact structure of $L(\Gamma)$ is overtwisted.
\end{claim}

\begin{proof}
Let $\alpha$ be a contact form on $S^3$ compatible with $L(\Gamma)$ obtained in
Proposition~\ref{lemma031}, and let $\alpha_\ell$ be a contact form 
on $\Sigma_\ell$ compatible with $L(\Gamma_\ell)$ which was obtained
in the proof of Proposition~\ref{lemma031}.
We denote by $m_1S_1,\cdots,m_\tau S_\tau$ the link components of $L(\Gamma_\ell)$
corresponding to the inner edges of $\hat\Gamma$ connected to, but not included in, $\hat \Gamma_\ell$.
From the construction, we have $\alpha=\alpha_\ell$ on 
$\Sigma_\ell\setminus\text{int\,} \sqcup_{i=1}^{\tau} N(S_i)$.
Note that, by Proposition~\ref{lemma031},
$\alpha_\ell$ has the form $\alpha_\ell=h_2(r)d\mu+h_1(r)d\lambda$ on each $N(S_i)$
such that the argument of $-h_1(r)+\sqrt{-1}h_2(r)$ varies in $(0,\pi]$.
In particular, $\ker\alpha_\ell$ does not have a half Lutz twist in $N(S_i)$.

On the other hand, let $\alpha'$ be a contact form on $\Sigma_\ell$,
whose kernel is compatible with $L(\Gamma_\ell)$,
obtained according to Proposition~\ref{prop2000}.
Remark that the edge $e$ specified in the assertion
is either an inner edge or an edge with arrowhead vertex.
Since the root of $e$ connected to $\hat \Gamma_\ell$ has sign $-$,
$(\Sigma_\ell, \ker\alpha')$ has an overtwisted disk $D$
in a tube of a half Lutz twist along the link component of $L(\Gamma_\ell)$ corresponding to $e$.
By deforming $\alpha'$ and applying the Gray's theorem~\cite{gray},
we can find a contact form $\alpha''$ on $\Sigma_\ell$ such that $\ker\alpha''$
is contactomorphic to $\ker\alpha'$, the equality $\alpha''=\alpha_\ell$ is satisfied
on $\sqcup_{i=1}^{\tau} N(S_i)$, and $\bd D$ does not intersect $\sqcup_{i=1}^{\tau} N(S_i)$
in $(\Sigma_\ell,\ker\alpha'')$.
Now we just follows the proof of~\cite[Proposition~9.2.7]{os}.
Prepare a $1$-form $\eta$ such that $d\eta$ is a volume form on the fiber surfaces
and vanishes near $\sqcup_{i=1}^{\tau} N(S_i)$, and then
show that $\ker\alpha''$ is contactomorphic to $\ker\alpha_\ell$ with preserving
the contact form $\alpha''=\alpha_\ell$ on $\sqcup_{i=1}^{\tau} N(S_i)$,
by connecting the contact forms $\alpha''+s\eta$ and $\alpha_\ell+s\eta$ with $s\geq 0$
by a one parameter family of contact forms.
Thus we conclude that the boundary $\bd D$ 
of the overtwisted disk $D$ in the contact manifold $(\Sigma_\ell, \ker\alpha_\ell)$ is 
included in $\Sigma_\ell\setminus\sqcup_{i=1}^{\tau} N(S_i)$.
This $D$ is still an overtwisted disk in $(S^3,\ker\alpha)$ 
since $\alpha=\alpha_\ell$ on $\Sigma_\ell\setminus \text{int\,}\sqcup_{i=1}^{\tau} N(S_i)$.
\end{proof}

\begin{claim}\label{claim102}
Suppose that all vertices of $\hat\Gamma_\ell$ are $\oplus$ and all edges have sign $+$.
Suppose further that all edges of $\hat\Gamma$ connected to $\hat \Gamma_\ell$
have sign $+$ at the endpoints connected to $\hat\Gamma_\ell$.
Let $e$ be an inner edge of $\hat\Gamma$ connected to, but not included in, $\hat\Gamma_\ell$.
Then the sign of the other endpoint of $e$ is $+$ and the inner vertex at
the other endpoint is $\oplus$.
\end{claim}

\begin{proof}
The first assertion follows from~\cite[Corollary~10.6]{en}.
We prove the second assertion. Let 
\[
     (\Sigma,L(\Gamma))=\left[(\Sigma_\ell,L(\Gamma_\ell)) \textstyle\frac{\;\;\;\;\;\;}{S_1\;\;\;\;\;S_2} (\Sigma', L(\Gamma'))\right]
\]
be the splice at the inner edge $e$. By the assumption and the first assertion,
the multiplicities $m_1$ of $S_1$ and $m_2$ of $S_2$ are both non-negative.
For $i=1,2$, let $N(S_i)=D^2\times S^1$ be the tubular neighborhood of $S_i$ for the splicing
and set coordinates $(r_i,\mu_i,\lambda_i)$ on $N(S_i)$ such that
$(r_i,\mu_i)$ are the polar coordinates of $D^2$ and
the orientation of $\lambda_i$ is consistent with the working orientation of $S_i$.
Let $(U_i,V_i)$ be a vector positively normal to the fiber surface
of $L(\Gamma)$ on $\bd N(S_i)$.
We assume, for a contradiction, that the inner vertex at the other endpoint is $\ominus$.

Suppose that $m_1>0$ and $m_2>0$.
Since the vertex at the root of $e$ connected to $\hat\Gamma_\ell$ is $\oplus$, we have $U_1>0$, 
see Figure~\ref{fig1f}.
On the other hand, since $m_2>0$ and the vertex at the other endpoint is $\ominus$,
we have $U_2<0$.
We now observe the identification of the positive normal vectors $(U_1,V_1)$ and $(U_2, V_2)$
after the gluing of the splice.
If $V_1>0$ then $(V_2,U_2)=K(U_1,V_1)$ with some constant $K>0$.
In particular, we have $U_2=KV_1>0$, which contradicts $U_2<0$.
If $V_1<0$ then $(V_2,U_2)=-K(U_1,V_1)$ with some constant $K>0$.
Hence $U_2=-KV_1>0$, which again contradicts $U_2<0$.

\begin{figure}[htbp]
   \centerline{\input{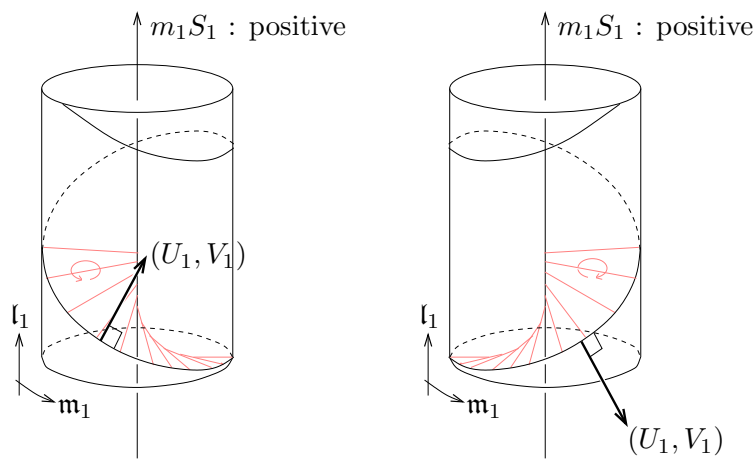}}
   \caption{The positive normal vector to the fiber surface on $\bd N(S_1)$.
Here $(\fM_1,\fL_1)$ is the preferred meridian-longitude pair on $\bd N(S_1)$
for the splicing.\label{fig1f}}
\end{figure}

If $m_1=0$ then $U_1=0$, $V_1>0$ and hence $U_2>0$, $V_2=0$ and $m_2>0$.
However $m_2>0$ implies $U_2<0$ as before, which is a contradiction.
If $m_2=0$ then $U_2=0$ and $V_2<0$ and hence $U_1<0$ and $V_1=0$,
which contradicts $m_1\geq 0$. This completes the proof.
\end{proof}

We continue the proof of Theorem~\ref{thm013}, i.e., prove the implication (1) $\Rightarrow$ (3).
By Claim~\ref{claim101}, all edges connected to $\hat v_1$ have sign $+$ at the roots
connected to $\hat v_1$, otherwise $\ker\alpha$ is overtwisted.
Then, by Claim~\ref{claim102},
the inner edge connecting $\hat v_1$ and $\hat v_2$ has sign $+$ at the endpoint
connected to $\hat v_2$ and moreover we have $\hat v_2=\oplus$.
We then use Claim~\ref{claim101} again for $\hat\Gamma_2$ and conclude that
all edges connected to $\hat \Gamma_2$ have sign $+$ at the roots connected to $\hat\Gamma_2$.
We continue this argument successively for $\hat v_3, \hat v_4,\cdots,\hat v_{\ell'}$
and finally obtain that
all inner vertices of $\hat\Gamma$ are $\oplus$ and all edges have sign $+$.
In particular, every non-empty link component of $L(\Gamma)$ has a positive multiplicity.
\end{proof}

\vspace{3mm}

\noindent
{\it Proof of Corollary~\ref{cor03}.}\;\,
The intersection $L_{fg}=\{fg=0\} \cap S_\ve$ is an oriented link in the $3$-sphere $S_\ve$ 
and the orientation is given as the boundary of the fiber surface of the 
Milnor fibration $\frac{fg}{|fg|}:S_\ve\setminus\{fg=0\}\to S^1$.
By~\cite[Appendix to Chapter~I]{en}, we know that
$L_{fg}$ is realized by a splice diagram with only $+$ signs, positive denominators
and positive multiplicities. On the other hand,
the oriented link $L_{f\bar g}=\{f\bar g=0\}\cap S_\ve$ of the fibration
$\frac{f\bar g}{|f\bar g|}:S_\ve\setminus\{fg=0\}\to S^1$
is obtained from $L_{fg}$ by reversing the orientations of
the link components corresponding to $\{g=0\}$,
as mentioned in~\cite[Proposition~3.1]{pichon}.
Hence the assertion follows from Theorem~\ref{thm01}.
\qed


\begin{thebibliography}{99} 
\bibitem{behm}K.~Baker, J.~Etnyre, J.~van Horn-Morris,
   {\it Cablings, contact structures and mapping class monoids},
   preprint, available at: arXiv.math.SG/1005.1978.
%\bibitem{bi}S.~Baader, M.~Ishikawa,
%   {\it Legendrian graphs and quasipositive diagrams}, 
%   Annales de la Facult\'e des Sciences de Toulouse, XVIII (2009), no. 2, 285--305.
%\bibitem{bennequin:83}D.~Bennequin,
%   {\it Entrelacements et \'equations de Pfaff},
%   Ast\'erisque {\bf 107-108} (1983), 87--161.
%\bibitem{be}K.~Baker, J.~Etnyre,
%   {\it Rational linking and contact geometry},
%   preprint, available at: arXiv.math.GT/0901.0380.
%\bibitem{bm}G.~Burde, K.~Murasugi,
%   {\it Links and Seifert fiber spaces},
%   Duke Math. J. {\bf 37} (1970), 89--93.
%\bibitem{bk}E.~Brieskorn, H.~Kn\"{o}rrer,
%   {\it Plane Algebraic Curves},
%   Birkh\"{a}user Verlag, Basel-Boston, 1981.
\bibitem{en}D.~Eisenbud, W.~Neumann,
   {\it Three-dimensional link theory and invariants of plane curve singularities},
   Ann. Math. Studies 110, Princeton Univ. Press, Princeton, NJ, 1985.
%\bibitem{eliashberg:88}
%   Y.~Eliashberg,
%   {\it Three lectures on symplectic topology in Cala Gonone --- Basic notions, problems and some methods},
%   Conference on Differential Geometry and Topology (Sardina, 1988), Rend. Sem. Fac. Sci. Univ. Cagliari
%   {\bf 58} (1988), suppl., 27--49.
%\bibitem{eliashberg:88}
%   Y.~Eliashberg,
%   {\it Filling by holomorphic discs and its applications},
%   Geometry of Low-Dimensional Manifolds: 2, London Math. Soc. Lect. Note Series
%151, pp.45--67, Cambridge Univ. Press, 1990.
%\bibitem{eliashberg:89}
%   Y.~Eliashberg,
%   {\it Classification of overtwisted contact structures on 3-manifolds},
%   Invent. Math. {\bf 98} (1989), 623--637.
%\bibitem{eliashberg}
%   Y. Eliashberg,
%   {\it Contact 3-manifolds twenty years since J. Martinet's work},
%   Ann. Inst. Fourier {\bf 42} (1992), 165--192.
%\bibitem{etnyre}J.B.~Etnyre,
%   {\it Lectures on open book decompositions and contact structures},
%    Floer homology, gauge theory, and low-dimensional topology, pp. 103--141, 
%    Clay Math. Proc., 5, Amer. Math. Soc., Providence, RI, 2006.
%\bibitem{eh:2000}J.B.~Etnyre, K.~Honda,
%   {\it On symplectic cobordisms},
%   Math. Ann. {\bf 323} (2002), 31--39.
\bibitem{geiges}H.~Geiges,
   {\it An Introduction to Contact Topology},
   Cambridge Studies in Adv. Math. 109, Cambridge Univ. Press, 2008.
%\bibitem{giroux0}E.~Giroux,
%   {\it Convexit\'e en topologie de contact},
%   Comment. Math. Helv. {\bf 66} (1991), 637-677.
%\bibitem{giroux:98}E.~Giroux,
%   {\it Structures de contact sur les vari\'{e}t\'{e}s fibr\'{e}es en cercles audessus d'une surface},
%   Comment. Math. Helv. {\bf 76} (2001), 218--262.
\bibitem{giroux}E.~Giroux,
   {\it G\'eom\'etrie de contact:
   de la dimension trois vers dimensions sup\'erieures},
   Proceedings of the International Congress of Mathematicians,
   Vol II (Beijing, 2002), pp.405--414,
   Higher Ed. Press, Beijing, 2002.
\bibitem{gray}J.~Gray,
   {\it Some global properties of contact structures},
   Ann. of Math. {\bf 69} (1959), 421--450.
%\bibitem{gromov}M.~Gromov,
%   {\it Pseudoholomorphic curves in symplectic manifolds},
%   Invent. Math. {\bf 82} (1985), 307--347.
%\bibitem{hedden} M.~Hedden,
%   {\it Notions of positivity and the Ozsv\'ath-Szab\'o concordance invariant},
%   to appear in J. Knot Theory Ramifications, available at: arXiv.math.GT/0509499.
%\bibitem{hedden} M.~Hedden,
%   {\it Notions of positivity and the Ozsv\'ath-Szab\'o concordance invariant},
%   preprint, available at: arXiv.math.GT/0509499.
%\bibitem{hedden2} M.~Hedden,
%   {\it Some remarks on cabling, contact structures, and complex curves},
%   to appear in Proc. G\"{o}kova Geom. Topol. Conference, 2007,
%   available at: arXiv.math.GT/0804.4327.
%\bibitem{honda}K.~Honda,
%   {\it Confoliations transverse to vector fields},
%   preprint, preliminary version is available at:\\ {\tt http://math.usc.edu/\~{}khonda/research.html}
\bibitem{ishikawa} M.~Ishikawa,
   {\it On the contact structure of a class of real analytic germs of the form $f\bar{g}$},
Singularities --- Niigata-Toyama 2007, Advanced Studies in Pure Mathematics 56, pp. 201-223, 2009.
\bibitem{ishikawa2} M.~Ishikawa,
   {\it Compatible contact structures of fibered Seifert links in homology $3$-spheres},
    preprint, available at: arXiv.math.GT/0904.0837.
%\bibitem{kt} Y.~Kamishima, T.~Tsuboi,
%   {\it CR-structures on Seifert manifolds},
%   Invent. Math. {\bf 104} (1991), 149--163.
%\bibitem{lm}P.~Lisca, G.~Mati\'{c},
%   {\it Transverse contact structures on Seifert $3$-manifolds},
%   Algebr. Geom. Topol. {\bf 4} (2004), 1125--1144.
%\bibitem{lp:2001}A.~Loi, R.~Piergallini,
%   {\it Compact Stein surfaces with boundary as branched covers of $B^4$},
%   Invent. Math. {\bf 143} (2001), 325--348.
%\bibitem{milnor:68}J.~Milnor,
%   {\it Singular Points of Complex Hypersurfaces},
%   Ann. Math. Studies 61, Princeton Univ. Press, 1968.
%\bibitem{mm}P.M.~Melvin, H.R.~Morton,
%   {\it fibered knots of genus $2$ formed by plumbing Hopf bands},
%   J. London Math. Soc. {\bf 34} (1986), 159--168.
%\bibitem{mori}A.~Mori, Osaka J. Math.
%\bibitem{mori2}A.~Mori, preprint.
%\bibitem{nr}W.~Neumann, L.~Rudolph,
%   {\it Difference index of vectorfields and the enhanced Milnor number},
%   Topology {\bf 29} (1990), 83--100.
%\bibitem{oka}M.~Oka,
%   {\it Non-degenerate mixed functions}, preprint,
%   available at:\\ {\tt http://www.ma.kagu.tus.ac.jp/\~{}oka/papers/list-preprint.html}
\bibitem{os}B.~Ozbagci, A.I.~Stipsicz,
   {\it Surgery on Contact $3$-manifolds and Stein Surfaces},
   Bolyai Society Mathematical Studies, 13, Springer, 2004.
%\bibitem{peltonen}K.~Peltonen,
%   {\it Contact structures on $S^3$ and Seifert fibrations},
%   Communications in Contemporary Math. {\bf 2} (2000), no. 1, 61--74.
\bibitem{pichon}A.~Pichon,
   {\it Real analytic germs $f\bar g$ and open-book decompositions
   of the $3$-sphere},
   International J. Math. {\bf 16} (2005), 1--12.
\bibitem{ps}A.~Pichon, J.~Seade,
   {\it Fibered multilinks and real singularities $f\bar g$}, 
   Math. Ann. {\bf 324} (2008), 487--514.
%\bibitem{rudolph1}L.~Rudolph,
%   {\it A characterization of quasipositive Seifert surfaces 
%   (Constructions of quasipositive knots and links, III)},
%   Topology {\bf 31} (1992), 231--237.
%\bibitem{rudolph2}L.~Rudolph,
%   {\it Quasipositive annuli (Constructions of quasipositive knots and links, IV)},
%    J. Knot Theory Ramifications {\bf 1} (1992), 451--466.
%\bibitem{rudolph3}L.~Rudolph,
%    {\it Quasipositivity as an obstruction to sliceness},
%    Bull. Amer. Math. Soc. (N.S.) {\bf 29} (1993), 51--59. 
%\bibitem{rudolph4}L.~Rudolph,
%   {\it Quasipositive plumbing (constructions of quasipositive knots and links, V)},
%    Proc. Amer. Math. Soc. {\bf 126} (1998), 257--267.
%\bibitem{rudolph5}L.~Rudolph,
%    {\it Positive links are strongly quasipositive},
%    Proceedings of the Kirbyfest (Berkeley, CA, 1998), pp. 555--562,
%    Geom. Topol. Monogr., 2, Geom. Topol. Publ., Coventry, 1999.
%\bibitem{rudolph6}L.~Rudolph,
%    {\it Quasipositive pretzels},
%    Topology Appl. {\bf 115} (2001), 115--123.
%\bibitem{st}A.~Sato, T.~Tsuboi,
%   {\it Contact structures on closed manifolds fibered by the circles},
%   Mem. Inst. Sci. Tech. Meiji Univ. {\bf 33} (1994), 41--46.
\bibitem{tw}W.P.~Thurston, H.~Winkelnkemper,
   {\it On the existence of contact forms},
   Proc. Amer. Math. Soc. {\bf 52} (1975), 345--347.
%\bibitem{torisu}I.~Torisu,
%   {\it Convex contact structures and fibered links in 3-manifolds},
%   Int. Math. Res. Not. {\bf 9} (2000), 441--454.
%\bibitem{seade}J.~Seade,
%   {\it On the topology of isolated singularities in analytic spaces},
%   Progress in Mathematics, 241. Birkh\"auser Verlag, Basel, 2006.
%\bibitem{wall}C.T.C.~Wall,
%   {\it Singular Points of Plane Curves},
%   London Math. Soc. Student Texts 63, Cambridge Univ. Press, 2004.
%\bibitem[Al]{alexander:23}J.~Alexander,
%   {\it A lemma on systems of knotted curves},
%   Proc. Nat. Acad. Sci. U.S.A. {\bf 9} (1923), 93-95.
%\bibitem{acampo:75}N.~A'Campo,
%   {\it La fonction z\^{e}ta d'une monodromie},
%   Comment. Math. Helv. {\bf 50} (1975), 233-248.
%\bibitem{baader}S.~Baader,
%   {\it Slice and gordian numbers of track knots},
%   Osaka J. Math. {\bf 42} (2005), 257-271.
%\bibitem{bi}S.~Baader, M.~Ishikawa,
%   {\it Legendrian graphs and quasipositive diagrams},
%   preprint, available at: arXiv.math.GT/0609592
%\bibitem{cerf}J.~Cerf,
%   {\it La stratification naturelle des espaces de fonctions 
%   diff\'erentiables r\'eeles et le th\'eor\`eme de la
%   pseudo-isotopie},
%   Publ. Math. I.H.E.S. {\bf 39} (1970).
%%\bibitem[El]{eliashberg:90}Y.~Eliashberg,
%%   {\it Topological characterization of Stein manifolds of dimension $>2$},
%%   Internat. J. Math. {\bf 1} (1990), 29-46.
%\bibitem{eliashberg:93}
%   Y. Eliashberf,
%   {\it Classification of contact structures on $\R^3$},
%   Internat. Math. Res. Notices {\bf 1993}, 87-91.
%%%\bibitem[Et-Ho]{eh:99}J.B.~Etnyre, K.~Honda,
%%%   {\it On the non-existence of tight contact structures},
%%%   to appear in Ann. of Math.
%%\bibitem{eh}J.B.~Etnyre, K.~Honda,
%%   {\it Knots and contact geometry. I. Torus knots and the figure eight knot},
%%   J. Symplectic Geom. {\bf 1} (2001), 63-120.
%%\bibitem{gabai:83}D.~Gabai,
%%   {\it The Murasugi sum is a natural geometric operation},
%%   Low-dimensional topology (San Francisco, Calif., 1981), Contemp. Math. 20, A.M.S. Providence RI, 1983, pp. 131-143.
%%%\bibitem[Ga2]{gabai:86}D.~Gabai,
%%%   {\it Detecting fibered links in $S^3$},
%%%   Comment. Math. Helv. {\bf 61} (1986), 519-555.
%%\bibitem{gompf:98}R.E.~Gompf,
%%%   {\it Handlebody construction of Stein surfaces},
%%   Ann. of Math. {\bf 148} (1998), 619-693.
%%\bibitem{goodman}N.~Goodman,
%%   {\it Contact structures and open books},
%%   Ph. D thesis, UT Austin, 2003.
%%\bibitem{harer:82}J.~Harer,
%%   {\it How to construct all fibered knots and links},
%%   Topology {\bf 21} (1982), 263-280.
%\bibitem{hedden} M.~Hedden,
%   {\it Notions of positivity and the Ozsv\'ath-Szab\'o concordance invariant},
%   preprint, available at: arXiv.math.GT/0509499
%%\bibitem{kawauchi}A.~Kawauchi,
%%   {\it A survey of knot theory},
%%   Birkh\"{a}user, Basel, 1996.
%%\bibitem{kobayashi}T.~Kobayashi,
%%   {\it Uniqueness of minimal genus Seifert surfaces for links},
%%   Topology Appl. {\bf 33}  (1989), no. 3, 265-279.
%%\bibitem[Lis1]{lisca:98}P.~Lisca,
%%   {\it Symplectic fillings and positive scalar curvature},
%%   Geometry \& Topology {\bf 2} (1998), 103-116.
%%\bibitem[Lis2]{lisca:99}P.~Lisca,
%%   {\it On symplectic fillings of $3$-manifolds},
%%   Proceedings of 6th G\"okova Geometry-Topology Conference. Turkish J. Math. {\bf 23} (1999), 151-159.
%%\bibitem{mm}P.M.~Melvin, H.R.~Morton,
%%   {\it fibered knots of genus $2$ formed by plumbing Hopf bands},
%%   J. London Math. Soc. (2) {\bf 34} (1986), no. 1, 159-168.
%\bibitem{h-cobordism}J.~Milnor,
%   {\it Lectures on the h-cobordism theorem},
%   Princeton University Press, Princeton, N.J. 1965.
%%\bibitem{mori}A.~Mori,
%%   {\it A note on Thurston-Winkelnkemper's construction of contact forms on $3$-manifolds},
%%   Osaka J. Math. {\bf 39} (2002), 1-11.
%%\bibitem{oka}M.~Oka,
%%   {\it Non-degenerate Complete Intersection Singularities},
%%   Hermann, Paris, 1997.
%%\bibitem{perron}B.~Perron,
%%   {\it Le n\oe ud "huit" est alg\'brique r\'eel},
%%   Invent. Math. {\bf 65} (1981/82), no. 3, 441-451.
%%\bibitem{ps2}A.~Pichon, J.~Seade,
%%   {\it Real singularities and open-book decomposition of the 3-sphere},
%%   Ann. Fac. Sci. Toulouse, vol XII (2003), no. 2, 245-265.
%%\bibitem{rolfsen}D.~Rolfsen,
%%   {\it Knots and links},
%%   Mathematics Lecture Series 7, Publish or Perish, Inc., Berkeley, Calif., 1976.
%%\bibitem{rudolph}L.~Rudolph, 
%%   {\it Constructions of quasipositive knots and links. I}.
%%   Knots, braids and singularities (Plans-sur-Bex, 1982), 233-245, Monogr.
%%   Enseign. Math. {\bf 31}, Enseignement Math. (Geneva, 1983).
%%\bibitem{rudolph:87}L.~Rudolph,
%%   {\it Isolated critical points of mappings from $\R^4$ to $\R^2$
%%   and a natural splitting of the Milnor number of a classical
%%   fibered link. I. Basic theory; examples},
%%   Comment. Math. Helv. {\bf 62} (1987), no. 4, 630-645.
%%\bibitem{seade2}J.~Seade,
%%   {\it Open book decompositions associated to holomorphic vector
%%   fields},
%%   Boletin Soc. Mat. Mex., New Series {\bf 3} (1997), 323-336.
%%\bibitem{stallings:78}J.R.~Stallings,
%%   {\it Constructions of fibered knots and links},
%%   A.M.S. Proc. Symp. in Pure Math. 32, 1978, pp. 55-60.
\end{thebibliography}
\end{document}